%
\documentclass[12pt,reqno]{amsart}

\usepackage{amssymb,enumerate,amsthm,multicol,graphicx,amsmath}

\topmargin-0.3in
\textheight8.95in    
\textwidth6.7in
\headheight 12pt \headsep 30pt
\footskip35pt
\oddsidemargin-0.10in
\evensidemargin-0.10in

\usepackage{pdfpages}

\DeclareMathAlphabet{\E}{U}{eus}{m}{n}     
    
\renewcommand{\a}{\alpha}
\newcommand{\Z}{{\E{Z}}}

\newcommand{\V}{{\mathcal V}}      

\newcommand{\A}{{\mathcal A}}

\newcommand{\PP}{{\mathbb P}}

\newcommand{\kk}{{\Bbbk}}

\newcommand{\LL}{{\mathfrak L}}

\newcommand{\p}{\mathfrak p}

\newtheorem{thm}{Theorem}[section]

\newtheorem{cor}[thm]{Corollary}
\newtheorem{prop}[thm]{Proposition}

\theoremstyle{definition}

\newtheorem{defn}[thm]{Definition}

\newtheorem{rmk}[thm]{Remark}
\newtheorem{rmks}[thm]{Remarks}

\newtheorem{ack}{Acknowledgments\!\!}  

\newtheorem{Pf}{Proof$\!\!$}         
         
\newenvironment{pf}{\begin{Pf}}{\qed\end{Pf}}


\DeclareMathSymbol{\twoheadrightarrow}  {\mathrel}{AMSa}{"10}

\newcounter{letter}
\renewcommand{\theletter}{\rom{(}\alph{letter}\rom{)}}

\newcounter{rnum}
\renewcommand{\thernum}{\rom{(}\roman{rnum}\rom{)}}


\begin{document}
\baselineskip21pt


\title[Line Scheme of a Family of Quadratic Quantum $\PP^3$s]%
{The One-Dimensional Line Scheme of a\\[2mm]
Family of Quadratic Quantum $\PP^3$s}

\subjclass[2010]{14A22, 16S37, 16S38}%

\keywords{line scheme, point scheme, elliptic curve, %
regular algebra, Pl\"ucker coordinates. \newline
\indent This work was supported in part by NSF grants DMS-0900239 and
DMS-1302050}

\maketitle

\vspace*{0.1in}

\baselineskip15pt

\begin{center}
\begin{tabular}[t]{c}
\textsc{Derek Tomlin}\\
derek.tomlin@mavs.uta.edu\\

\end{tabular}
\quad  and \quad  
\begin{tabular}[t]{c}
\textsc{Michaela Vancliff}\\
vancliff@uta.edu\\
www.uta.edu/math/vancliff
\end{tabular}\\[3mm]
Department of Mathematics, P.O.~Box 19408\\
University of Texas at Arlington,\\
Arlington, TX 76019-0408
\end{center}

\setcounter{page}{1}
\thispagestyle{empty}

\bigskip
\bigskip

\begin{abstract}
\baselineskip15pt

The attempted classification of regular algebras of global dimension four, so-called quantum $\PP^3$s, has been a driving force for modern research in noncommutative algebra.  Inspired by the work of Artin, Tate, and Van den Bergh, geometric methods via schemes of \emph{d-linear} modules have been developed by various researchers to further their classification. In this work, we compute the line scheme of a certain family of algebras whose generic member is a candidate for a generic quadratic quantum $\PP^3$. We find that, viewed as a closed subscheme of $\PP^5$, the generic member has a one-dimensional line scheme consisting of eight curves: one nonplanar elliptic curve in a $\PP^3$, one nonplanar rational curve with a unique singular point, two planar elliptic curves, and two subschemes, each consisting of the union of a nonsingular conic and a line.
 
\end{abstract}

\baselineskip18pt

\bigskip
\bigskip


\section*{Introduction}

Regular algebras of global-dimension $n$ are often viewed as noncommutative analogues of polynomial rings on $n$ variables and will herein be called quantum $\PP^{n-1}$s.  Many geometric objects (points, lines, etc.) associated with polynomial rings have counterparts that are associated with quantum $\PP^{n}$s.  In \cite{ATV1}, quantum $\PP^2$s were classified via their point schemes,  and subsequently, a similar description of quantum $\PP^3$s is desired.  Using the definitions given in \cite{SV1}, a classification of quantum $\PP^3$s using their point schemes or their line schemes is sought.  This is due to the well-known fact that if a generic quadratic quantum $\PP^3$ exists, then it has a point scheme consisting of exactly twenty distinct points and a one-dimensional line scheme (see \cite[\S 1D]{Vmsri}).

To date, not many line schemes of quadratic quantum $\PP^3$s are known, especially of those considered to be candidates for generic quadratic quantum $\PP^3$s.  Fortunately, the regular graded skew Clifford algebras introduced in \cite{CV1} produce many examples of algebras that are candidates for generic quantum $\PP^3$s.  With the technique introduced in \cite{SV2} for computing the line scheme of any quadratic algebra on four generators, these algebras promise to shed light on the open question of classifying all quantum $\PP^3$s.  In this article, we compute and analyze the line scheme of a family of algebras introduced in \cite[\S 5, Example 2]{CV1} whose generic member is a candidate for a generic quadratic quantum $\PP^3$. The line scheme of these algebras has not been identified elsewhere, not even in \cite{CV1}.

In \cite{ChV}, the line scheme of a different algebra from \cite[\S 5]{CV1} was computed, and a conjecture was given in \cite{ChV} regarding the line scheme of the most generic quadratic quantum $\PP^3$. Although the line scheme described herein is different from the one computed in \cite{ChV}, our results nevertheless support \cite[Conj. 4.2]{ChV}.

The article is outlined as follows. We introduce the algebras of study in Section \ref{sec1}, and we compute their point schemes in Section \ref{sec2} in Theorem \ref{ptscheme}. In particular, we verify that each algebra has a point scheme consisting of exactly twenty distinct points. Section \ref{sec3} is dedicated to the computation and description of the line scheme viewed as a subscheme of $\PP^5$, while Section \ref{sec4} discusses the lines in $\PP^3$ that are parametrized by the line scheme. With Theorems \ref{8curves} and \ref{redthm}, we prove that the line scheme of the generic member is reduced and is the union of eight irreducible curves: a nonplanar elliptic curve in a $\PP^3$ (spatial elliptic curve), a nonplanar rational curve in a $\PP^3$ with one singular point, two planar elliptic curves, and two subschemes, each consisting of the union of a nonsingular conic and a line. Unlike \cite{ChV}, we investigate the intersection points of the components of the line scheme and consider how they relate to information in the algebra in Corollary \ref{intpoints} and Remarks \ref{midlrmks}. In Theorem \ref{6lines}, we show that exactly sixteen points of the point scheme lie on only finitely many lines parametrized by the line scheme. The polynomials used for calculations throughout the article are listed in the Appendix in Section~\ref{app}.


\bigskip
\bigskip

\section{The Algebras} \label{sec1}

In this section, we introduce the algebras that will be studied within this paper. These algebras depend on a scalar $\a \in\kk$; the scalar $\a$ will be called generic if it satisfies $\a(1~-~\a^2)~\neq~0$.

Throughout, $\kk$ denotes an algebraically closed field, 
and for a vector space $V$, we denote its vector-space dual as $V^{\text{*}}$. In the first two sections, we assume char($ \kk$) $ \neq 2$; however in later sections, we take char($ \kk$) $ = 0$ due to computations therein.

\begin{defn}\label{Aa}\cite{CV1}
Let $\A(\a)$ denote the $\kk$-algebra
on degree-one generators $x_1, \ldots, x_4$ with the following defining relations:
\[
\begin{array}{rclrclrcl}
 x_3 x_1+x_1 x_3 & = & 0, \quad & \quad x_3 x_2 - x_2 x_3& = & 0, \quad & \quad
2x_2^2+\a x_3^2 & = & x_1^2,\\[3mm]
x_4 x_1 +x_1x_4& = & 0,  & x_2^2-x_4^2 & = & 0, &  x_4x_2+x_2x_4 & = & x_3^2,
\end{array}
\]
where $\a\in\kk$ is generic.
	\end{defn}

\noindent The algebra $\A(\a)$ is a member of the family of algebras $A(\a_1, \, \a_2, \, \beta_1, \, \beta_2 \,)$ presented in Example~2 of \cite[\S 5]{CV1}; in that context, $\A(\a)=A(\a, \, 0, \, 2, \, 0 \,)$.  The algebras in \cite[\S 5, Example 2]{CV1} were constructed with the objective of generating a generic quadratic regular algebra of global dimension four -- a so-called generic quantum $\PP^3$.  By construction in \cite{CV1}, $\A(\a)$ is a noetherian domain having Hilbert series the same as that of the polynomial ring on four variables. Furthermore, $\A(\a)$ is a candidate for a generic quantum $ \PP^3 $ since it has a finite point scheme consisting of exactly twenty distinct points and a one-dimensional line scheme (cf. \cite[\S 1D]{Vmsri}).


\bigskip

\section{The Point Scheme of $\A(\a)$}\label{sec2}
In this section, we compute the point scheme of $\A(\a)$.  The method used follows that of \cite{ATV1}, and we continue to assume char$( \kk) \: \neq 2$ in this section.

Let $ V = \sum_{i=1}^4 \kk x_i$, and write the relations of $\A(\a)$ as the product $Mx$, where $x$ is the column vector given by $x^T=(x_1,\ldots, x_4)$ and $M$ is the $6\times 4$ matrix:

\[
\begin{bmatrix} 
x_3 & 0 & x_1 & 0\\
0 & -x_3 & x_2 & 0\\
x_4 & 0 & 0 & x_1\\
-x_1 & 2 x_2 & \a x_3 & 0\\
0 & x_4 & -x_3 & x_2\\
0 & x_2 & 0 & -x_4
\end{bmatrix}.
\]

\noindent Let $\Gamma$ be the scheme in $\PP(V^\text{*})\times \PP(V^\text{*})$ that is the zero locus of the six defining relations of $\A(\a)$, and let $\p$ denote the image of $\Gamma$ under the projection map from $\PP(V^\text{*})\times \PP(V^\text{*})$ to the first copy of $\PP(V^\text{*})$.  The scheme $\p$ is the zero locus of the $4\times 4$ minors of $M$, which are the polynomials given in the Appendix in Section \ref{app1}.  Referencing \cite{SV}, $\A(\a)$ satisfies sufficient conditions such that $\Gamma$ is the graph of an automorphism $\sigma\in \text{Aut}(\p)$. Moreover, the \textit{point scheme}, $\p(\a)$, of $\A(\a)$ may be viewed as $\Gamma$; the closed points of which parametrize the isomorphism classes of point modules over $\A(\a)$.  We will show, for generic $\a$, that the scheme $\p(\a)$ is finite with exactly twenty distinct points and that $\sigma\in \text{Aut}(\p)$ has ten orbits of order two.

Let $q=( \lambda_1, \ldots ,\lambda_4 ) \in \p$.  If $\lambda_4=0$, it follows that $q$ is one of the points $e_1=(1, \, 0, \, 0, \, 0 )$, $e_2=( 0, \, 1, \, 0, \, 0 )$, or $e_3=(0, \, 0, \, 1, \, 0)$.  We henceforth assume $\lambda_4=1$. By additionally assuming that $\lambda_2=0$, it can be shown that $q$ must equal $e_4=(0, \, 0, \, 0, \, 1)$.  Thus, we proceed with $\lambda_4=1$ and $\lambda_2\neq 0$.

Under these restrictions, Polynomial \ref{app1}.\ref{pp3} becomes $\;x_1x_3(x_2-1)(x_2+1)$, and hence it suffices to consider exactly four cases: $\lambda_2=1$, $\lambda_2=-1$, $\lambda_1=0$ and $\lambda_3=0$.

If $\lambda_2=1$, then Polynomials \ref{app1}.\ref{pp12} and \ref{app1}.\ref{pp13} imply that $\,x_1^2+2(1+\a)=0$ and $x_3^2=2$.  Moreover, the remaining polynomials vanish on the zero locus of these two polynomials; this case therefore yields a subvariety, $\Z_1$, of $\p$ consisting of exactly four distinct points.   Similarly for $\lambda_2=-1$, the zero locus, $\Z_2$, of $\,x_1^2+2(1-\a)$ and $x_3^2+2$ characterizes another affine subvariety of $\p$ that contains exactly four distinct points.

Assuming that $\lambda_3=0$ implies that $q\in\p$ if and only if $q$ belongs to the zero set, $\Z_3$, of $\,x_1^2+2$ and $x_2^2+1$. Lastly, by setting $\lambda_1=0$ we see that rank$(M)=0$ if and only if $q$ is in the zero locus, $\Z_4$, of $\, \a x_2^2+2x_2+\a$ and $\a x_3^2+2x_2^2$.   In particular,  $\Z_3$ and $\Z_4$ each contain exactly four distinct points (since char$(\kk)\neq 2$ and $\a$ is generic).
  
The following remark and the above discussion provide the proof of Proposition \ref{ptS}.

\begin{rmk}\label{rmk} (cf. \cite[\S 1D]{Vmsri})
If the zero locus, $\mathfrak{z}$, of the defining relations of a quadratic algebra on four 
generators with six defining relations is finite, then $\mathfrak{z}$ consists of twenty
points counted with multiplicity.
\end{rmk}

\begin{prop}\label{ptS}
Let $\A(\a)$, $\p$, and $\Z_1, \ldots, \Z_4$ be as above.  Writing $\Z_0=\{ e_1,\ldots,e_4 \}$, 
the scheme $\p=\bigcup_{i=0}^4\Z_i$, and $\p$ contains exactly twenty distinct points, each of multiplicity one.

\end{prop}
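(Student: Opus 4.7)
The plan is to assemble the exhaustive case split preceding the statement into the set-theoretic identity $\p = \bigcup_{i=0}^4 \Z_i$, count the points in each $\Z_i$, verify that the five subvarieties are pairwise disjoint, and then invoke Remark \ref{rmk} to upgrade ``twenty distinct points'' to ``each of multiplicity one''.

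First I would observe that the cases already analyzed exhaust $\p$: a point $q = (\lambda_1,\ldots,\lambda_4) \in \p$ with $\lambda_4 = 0$ is one of $e_1, e_2, e_3$, while if $\lambda_4 = 1$ and $\lambda_2 = 0$ then $q = e_4$, and otherwise Polynomial \ref{app1}.\ref{pp3}, which factors as $x_1 x_3 (x_2-1)(x_2+1)$, partitions the remaining points among $\Z_1, \Z_2, \Z_3, \Z_4$. The reverse inclusions $\Z_i \subseteq \p$ are built in by construction, so $\p = \bigcup_{i=0}^4 \Z_i$.

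Next I would count. Since $\mathrm{char}(\kk) \neq 2$ and $\a(1-\a^2) \neq 0$, each of the two quadratics $x_1^2 + 2(1+\a)$ and $x_3^2 - 2$ defining $\Z_1$ is separable, giving $\Z_1$ exactly $2 \cdot 2 = 4$ points, and the same argument handles $\Z_2$ and $\Z_3$. For $\Z_4$, the discriminant of $\a x_2^2 + 2x_2 + \a$ is $4(1-\a^2) \neq 0$ and both of its roots are nonzero since $\a \neq 0$; therefore $\a x_3^2 + 2x_2^2$ is separable in $x_3$ for each choice of $x_2$, yielding four points in $\Z_4$ as well. Together with $|\Z_0| = 4$, the five subvarieties contribute at most twenty points.

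Pairwise disjointness is the main bookkeeping step, and the argument turns on the value of $\lambda_2$. The four points of $\Z_0$ satisfy $\lambda_4 = 0$ or $\lambda_2 = 0$, whereas every point of $\Z_1 \cup \Z_2 \cup \Z_3 \cup \Z_4$ has $\lambda_4 = 1$ and $\lambda_2 \neq 0$. Among the latter, $\lambda_2 = 1$ on $\Z_1$, $\lambda_2 = -1$ on $\Z_2$, and $\lambda_2^2 = -1$ on $\Z_3$, separating these three. For $\Z_4$, substituting each of these three values of $\lambda_2$ into $\a \lambda_2^2 + 2\lambda_2 + \a = 0$ forces $\a = \pm 1$ or $\lambda_2 = 0$, each excluded by the genericity hypothesis. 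With twenty distinct points in hand, Remark \ref{rmk} forces multiplicity one at each, completing the proof. The one step requiring genuine care is this disjointness verification, which quietly uses both factors of $\a(1-\a^2) \neq 0$; the remainder is routine.
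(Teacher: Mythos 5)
Your proposal is correct and follows essentially the same route as the paper: it packages the preceding case analysis into $\p=\bigcup_{i=0}^4\Z_i$, counts four points in each $\Z_i$ using char$(\kk)\neq 2$ and $\a(1-\a^2)\neq 0$, and invokes Remark \ref{rmk} to conclude each point has multiplicity one. The only difference is that you make the pairwise-disjointness check (via the values of $\lambda_2$ and genericity of $\a$) explicit, which the paper leaves implicit in its list of the twenty points.
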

\begin{pf}
The above discussion proves that the scheme $\p$ is finite and has exactly twenty distinct closed points, namely: 
\begin{enumerate}
\itemsep1mm
\item[\textup{(o)}] $ \Z_0=\{ e_1,\ldots,e_4 \} $,

\item[\textup{(i)}]$ \Z_1=\{ \,(\lambda_1,\,1,\,\lambda_3,\,1)\;\;\;\;| \; \lambda_1^2=-2(1+\a) \, , \; \lambda_3^2=2\, \},$

\item[\textup{(ii)}]$ \Z_2=\{ \,(\lambda_1,\,-1,\,\lambda_3,\,1)\;| \; \lambda_1^2=-2(1-\a) \, ,  \; \lambda_3^2=-2\, \},$

\item[\textup{(iii)}]$\Z_3= \{ 
\, (\lambda_1,\,\lambda_2,\,0,\,1)\;\;\;\; | \; \lambda_1^2=-2,\,  \; \lambda_2^2=-1\,\}, $

\item[\textup{(iv)}]$\Z_4=\{ \, (0,\,\lambda_2,\,\lambda_3,\,1)\;\;\;\;|\; \a\lambda_2^2+2\lambda_2+\a=0, \,  \; \a\lambda_3^2=-2\lambda_2^2\,\}.$
\end{enumerate}
Paired with Remark 2.1, the proof is complete. \end{pf}

\begin{cor}\label{ptscheme}
The points of $\p(\a)$ are closed, of the form $(q,\,\sigma(q)) \in \Gamma \subset \PP(V^\text{*})\times\PP(V^\text{*})$, and are given by

\begin{enumerate}
\itemsep1mm
\item[\textup{(o)}] $ (e_1,\,e_2),\,(e_2,\,e_1)\,,(e_3,\,e_4),\,(e_4,\,e_3)$ ,

\item[\textup{(i)}]$ (\;(\lambda_1,\,1,\,\lambda_3,\,1), \,(-\lambda_1,\,1,\,\lambda_3,\,1) \;)  , $ where $\, \lambda_1^2=-2(1+\a) \, $ and $ \lambda_3^2=2 $, 

\item[\textup{(ii)}]$ ( \;
(\lambda_1,\,-1,\,\lambda_3,\,1),\, (-\lambda_1,\,-1,\,\lambda_3,\,1)\;) , $ where $
 \, \lambda_1^2=-2(1-\a) \, $ and $ \lambda_3^2=-2 $,

\item[\textup{(iii)}]$( \; 
 (\lambda_1,\,\lambda_2,\,0,\,1),\,  (-\lambda_1,\,-\lambda_2,\,0,\,1)
 \; ), $ where $ \, 
 \lambda_1^2=-2 \, $ and $ \lambda_2^2=-1 $,

\item[\textup{(iv)}] $ ( \; 
(0,\,\lambda_2,\,\lambda_3,\,1),\,  (0,\,\lambda_2,\,\lambda_3,\,\lambda_2^2)\;)
, $ where $ \,
\a\lambda_2^2+2\lambda_2+\a=0 \, $ and $ \a\lambda_3^2=-2\lambda_2^2 $.
 
\end{enumerate}
Furthermore, $\sigma(\Z_i)=\Z_i$ for every $i=0,\ldots,4$, and $\sigma$ has ten orbits of order two.
\end{cor}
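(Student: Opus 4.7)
The plan is to leverage the fact, already noted before Proposition \ref{ptS}, that $\Gamma$ is the graph of the automorphism $\sigma \in \text{Aut}(\p)$. Concretely, for any $q \in \p$ this means $\sigma(q) \in \PP(V^*)$ is the unique (up to scalar) nonzero vector in the kernel of the $6 \times 4$ matrix $M|_q$ obtained by evaluating the entries of $M$ at the coordinates of $q$. Since Proposition \ref{ptS} already identifies the twenty points of $\p$ as $\Z_0 \cup \Z_1 \cup \Z_2 \cup \Z_3 \cup \Z_4$, the task reduces to doing one kernel computation per stratum and reading off the resulting image.

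First I would handle $\Z_0$: plugging each $e_i$ into $M$ reduces it to a matrix whose kernel is manifestly spanned by another coordinate vector, yielding the pairings $(e_1,e_2)$ and $(e_3,e_4)$. For $q$ in $\Z_1$, $\Z_2$, or $\Z_3$, a direct substitution shows that the vector obtained from $q$ by negating $\lambda_1$ (and additionally $\lambda_2$ in case $\Z_3$) lies in $\ker M|_q$: five of the six rows vanish from the elementary symmetry of the rows in $x_1$, and the remaining row (row 4 of $M$) vanishes precisely because of the defining conditions $\lambda_1^2 = -2(1\pm\a)$ and $\lambda_3^2 = \pm 2$ (respectively $\lambda_1^2 = -2$, $\lambda_2^2 = -1$). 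In each case this verifies the pairing stated in the Corollary and shows $\sigma(\Z_i) \subseteq \Z_i$.

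Case $\Z_4$ is the main obstacle, since neither a symmetry argument nor mere negation produces $\sigma(q)$. Here $q = (0, \lambda_2, \lambda_3, 1)$, and row reduction of $M|_q$ — using both defining conditions $\a\lambda_2^2 + 2\lambda_2 + \a = 0$ and $\a\lambda_3^2 + 2\lambda_2^2 = 0$ to force the necessary rank drop — produces a kernel spanned by $(0, \lambda_2, \lambda_3, \lambda_2^2)$, matching exactly the formula claimed in (iv). To see explicitly that this lies in $\Z_4$, one rescales by $\lambda_2^{-2}$ and sets $\mu_2 = \lambda_2^{-1}$, $\mu_3 = \lambda_3\lambda_2^{-2}$; substituting into $\a\mu_2^2 + 2\mu_2 + \a$ and $\a\mu_3^2 + 2\mu_2^2$ and clearing denominators recovers the two defining equations of $\Z_4$, so $\sigma(\Z_4) \subseteq \Z_4$.

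To conclude, I would check that $\sigma$ has no fixed points on any $\Z_i$, so that $\sigma(\Z_i) = \Z_i$ follows from cardinality and each $\Z_i$ decomposes into orbits of size two. For $\Z_0$ this is obvious. For $\Z_1$, $\Z_2$, $\Z_3$, a fixed point forces $\lambda_1 = 0$, contradicting $\lambda_1^2 \in \{-2(1\pm\a), -2\}$ under the genericity hypothesis $\a(1 - \a^2) \neq 0$. For $\Z_4$, a fixed point forces $\lambda_2^2 = 1$; combined with $\a\lambda_2^2 + 2\lambda_2 + \a = 0$ this yields $\lambda_2 = -\a$ and hence $\a^2 = 1$, again contradicting genericity. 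Thus $\sigma$ acts as a fixed-point-free involution on the twenty points of $\p$, producing exactly ten orbits of order two.
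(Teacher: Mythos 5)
Your proposal is correct and is essentially the paper's own (largely unwritten) argument: the paper's proof is simply ``by computation'' from Proposition \ref{ptS}, and the natural computation is exactly yours, namely that for each $q$ in a stratum $\Z_i$ the point $\sigma(q)$ is the unique point of the projectivized kernel of $M|_q$, verified stratum by stratum, with the fixed-point check giving the ten orbits of order two. Two harmless slips: in cases $\Z_1$--$\Z_3$ it is not only row 4 that uses the defining conditions (row 5 needs $\lambda_3^2=\pm2$ in $\Z_1,\Z_2$, and row 6 needs $\lambda_2^2=-1$ in $\Z_3$), though the direct substitution still confirms the claimed kernel vector; and ``ten orbits of order two'' requires $\sigma^2=\mathrm{id}$ as well as fixed-point-freeness (freeness alone would allow a $4$-cycle on $\Z_4$), but this is immediate from your explicit formulas, e.g.\ applying $(0,\lambda_2,\lambda_3,1)\mapsto(0,\lambda_2^{-1},\lambda_3\lambda_2^{-2},1)$ twice returns the original point.
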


\begin{proof}
The result follows from Proposition \ref{ptS} and by computation using the polynomials in the Appendix in Section \ref{app1}. 
\end{proof}


\section{The Line Scheme of $\A(\a)$}\label{sec3}
In this section, we compute the line scheme, $\LL(\a)$, of $\A(\a)$ following the
method introduced in \cite{SV2}.  The method used is summarized in Section
\ref{meth} while being applied to $\A(\a)$.  The closed points of the line scheme
are computed in Section \ref{clines}, and, following that discussion, we prove that the scheme $\LL(\a)$ is a reduced scheme and is therefore described by its closed points.  

Henceforth, we assume char$(\kk)=0$.  Recall the definition $ V = \sum_{i=1}^4 \kk x_i$ given at the start of Section \ref{sec2}.

\subsection{Methodology}\label{meth} \hfill

In \cite{SV2}, Shelton and Vancliff introduced a method for computing the line scheme of any quadratic algebra on four generators that is a domain having Hilbert series $(1-t)^{-4}$.  In this subsection, we summarize their method while applying it to $\A(\a)$.

To describe $\LL(\a)$, one first constructs $\A(\a)^!$, the Koszul dual of $\A(\a)$.  Since $\A(\a)$ has four generators and six quadratic defining relations, $\A(\a)^!$  is a quadratic algebra on four generators, $\{z_1,\ldots,z_4\}$, with ten defining relations, $\{f_1,\ldots,f_{10}\}$.  In particular, $f_i(r)=0$ for all elements~$r$ in the span of the defining relations of $\A(\a)$, for all $i=1,\ldots,10$, and one takes $ \{z_1,\ldots,z_4\}$ to be the dual basis in $V^\text{*}$ of $\{x_1,\ldots,x_4\}$. Similar to the start of Section \ref{sec2}, one presents the defining relations of $\A(\a)^!$ by a matrix equation $\hat{M}z=0$, where $z^T=(z_1,\ldots,z_4)$ and $\hat{M}$ is a $10\times 4$ matrix whose entries are linear forms in the $z_i$.

Continuing to follow \cite{SV2}, a $10\times 8$ matrix is produced by concatenating two $10\times 4$ matrices created from $\hat{M}$.  The first such matrix is obtained by replacing each $z_i$ in $\hat{M}$ with $u_i\in\kk$, where 
$\kk\sum_{i=1}^4 u_i x_i
\in\PP(V)$, and the second matrix is obtained by replacing each $z_i$ in $\hat{M}$ with
$v_i\in\kk$, where 
$\kk\sum_{i=1}^4 v_i x_i
\in\PP(V)$. Applying this process to $\A(\a)$ gives the following $10\times 8$ matrix: 

\[
\small
\mathcal M(\a) = 
\begin{bmatrix}
\a u_1 & u_4 &  u_3 & 0 & \a v_1 &  v_4 &  v_3 & 0\\
0 & u_1 & 0 & 0 & 0 & v_1 & 0 & 0\\
-u_3 & 0 & u_1 & 0 &-v_3 & 0 & v_1 & 0 \\
-u_4 & 0 & 0 & u_1 & -v_4 & 0 & 0 & v_1 \\
u_2 & 0 & 0 & 0 & v_2 & 0 & 0 & 0\\
0 & \a u_2 -2 u_4 & -2 u_3 & \a u_4 & 0 & \a v_2 - 2 v_4 & - 2 v_3 & \a v_4 \\
0 & u_3 & u_2 & 0 & 0 & v_3 & v_2 & 0\\
0 & -u_4 & 0 & u_2 & 0 & -v_4 & 0 & v_2\\
0 & 0 & 0 & u_3 & 0 & 0 & 0 &  v_3\\
0 & 0 & u_4 & 0 & 0 & 0 & v_4 & 0
\end{bmatrix}.
\]
\quad
Each of the forty-five $8\times 8$ minors of $\mathcal M(\a)$ is a bihomogeneous polynomial of bidegree $(4,4)$ in the $u_i$ and $v_i$.  In particular, each $8\times 8$ minor is a linear combination of products of polynomials of the form $N_{ij}=u_iv_j-u_jv_i$, where $1\leq i<j\leq 4$. Thus, $\mathcal M(\a)$ produces forty-five quartic polynomials in the six variables $N_{12},\ldots,N_{34}$.  Applying the orthogonality isomorphism,
\\[-2mm]
\[
\begin{array}{l}
N_{12}\mapsto M_{34}, \quad\ N_{13}\mapsto -M_{24},\quad \ N_{14}\mapsto M_{23},\\[3mm]
N_{23}\mapsto M_{14},\quad \ N_{24}\mapsto -M_{13}, \quad\ N_{34}\mapsto M_{12},
\end{array}
\]
\quad\\[-1mm]
described in \cite{SV2} to these polynomials yields forty-five quartic polynomials in the Pl\"ucker coordinates, $M_{12},\,M_{13},\,M_{14},\,M_{23},\,M_{24},$ and $M_{34},$ on $\PP^5$.  

The line scheme, $\LL(\a)$, of $\A(\a)$ can be realized as the scheme of zeros in $\PP^5 $ of these forty-five quartic polynomials in the  $M_{ij}$ coordinates coupled with the Pl\"ucker polynomial $P=M_{12}M_{34}-M_{13}M_{24}+M_{14}M_{23}$.  These polynomials for $\A(\a)$ were computed using Wolfram's Mathematica and are listed in the Appendix in Section \ref{app2}.  

For the duration of this section, we outline the computation of, and describe the closed points of, $\LL(\a)$ as a subscheme of $\PP^5$.  The lines in $\PP(V^\text{*})$ characterized by these closed points will be discussed in the following section.
\bigskip
\subsection{The Closed Points of $\LL(\a)$}\label{clines}\hfill

We compute the closed points of the line scheme $\LL(\a)$ in this subsection. Following this computation, we show that $\LL(\a)$ is reduced and hence given by its closed points.  Let $\LL'(\a)$ denote the variety of closed points of $\LL(\a)$, and for a set $S$ of polynomials, we write $\V(S)$ for its zero locus.

Using Mathematica to compute a Gr\"obner basis of the forty-six polynomials in Section \ref{app2} produces a list of polynomials that contains $M_{13}M_{23}^2M_{24}$.  If $M_{13}=M_{23}=M_{24}=0$, then $M_{12}=0$ or $M_{34}=0$.  If $M_{12}=0=M_{34}$, then a unique solution is obtained. If $M_{34}\neq 0$, then $M_{12}=0$, and only polynomials \ref{app2}.\ref{lp27} and \ref{app2}.\ref{lp45} remain; that is, $M_{14}M_{34}\left(M_{14}^2+\a M_{34}^2\right)=0$ and $M_{34}^2\left(M_{14}^2+\a M_{34}^2\right)=0$, giving two distinct solutions.  Similarly, if $M_{12}\neq 0$, then $M_{34}=0$, and the only surviving polynomials are \ref{app2}.\ref{lp36} and \ref{app2}.\ref{lp39}. Whence, $M_{12}M_{14}\left(M_{12}^2+M_{14}^2\right)=0$ and $M_{12}^2\left(M_{12}^2+M_{14}^2\right)=0$, which also yields two distinct solutions.  Thus at this stage, we have found five distinct points of $\LL'(\a) $; namely 
\[
\begin{array}{l}

\V\left(M_{12},\, M_{13},\,M_{23},\,M_{24},\,M_{34}\right),\qquad \V\left(M_{12},\, M_{13},\,M_{23},\,M_{24},\,M_{14}\, \pm i a M_{34}\right), \\[2mm]

\qquad\qquad\text{and}\quad\quad \;\;\V\left(M_{13},\, M_{23},\,M_{24},\,M_{34},\,M_{12} \pm i M_{14}\right),

\end{array}
\]
\noindent where $i^2=-1$ and $a^2=\a$.  It follows that six cases remain to be considered:\\[-2mm]

\begin{center}
\begin{tabular}{rlrl}
(I)& $M_{23} \neq 0, M_{13}=0=M_{24}$,\hspace*{9mm} &
(II) & $M_{13}M_{24} \neq 0, M_{23}=0$,\\[2mm]
(III)& $M_{13}\neq 0, M_{23}=0=M_{24}$, &
(IV)& $M_{13}M_{23}\neq 0, M_{24}=0$, \\[2mm]
(V) & $M_{24} \neq 0, M_{13}=0=M_{23}$, &
(VI) & $M_{23}M_{24} \neq 0, M_{13}=0$.
\end{tabular}
\end{center}

\quad\\

\noindent
\textbf{Case (I):}
$M_{23}\neq 0$ and $M_{13}=0= M_{24}$.\\
With these assumptions, a Gr\"obner basis contains $M_{34}^2\rho$, where $\rho=M_{12}^2 + M_{14}^2 + \a M_{23}^2 - 2 M_{23} M_{34} +\a M_{34}^2$.  If $M_{34}=0$, it follows that $M_{14}=0$, $\,M_{12}^2(M_{12}^2+\a M_{23}^2)=0$, and the remaining polynomials vanish, in which case the zero set consists of three distinct points that are contained in the subvariety $\LL_{6a}\cup\LL_{6b}$ discussed in Case (VI) below.  Thus, we may assume $M_{34}\neq 0$; in particular, this means $\rho=0$.  Computing another Gr\"obner basis yields a set of four polynomials, one of which is the image of the Pl\"ucker polynomial $P$ and one of which is~$\rho$.  Using the image of $P$ to substitute for $M_{14}M_{23}$ in each of the other polynomials in this Gr\"obner basis reveals that they each vanish if and only if $\rho=0$.  This case therefore provides the irreducible component
\[
\LL_1=\V\left(M_{13},\,M_{24},\,M_{12}M_{34}+M_{14}M_{23},\,M_{12}^2 + M_{14}^2 + \a M_{23}^2 - 2 M_{23} M_{34} +\a M_{34}^2\right)
\]
\noindent of $\LL'(\a)$.\\[1mm]

\noindent
\textbf{Case (II):}
$M_{13}M_{24}\neq 0$ and $M_{23}=0$.

\noindent In this case, a Gr\"obner basis contains $M_{13}M_{14}M_{24}^2$, implying  $M_{14}=0$. Imposing this restriction implies $M_{13}M_{24}\phi=0$, where $\phi=M_{12}^2+2M_{24}^2+\a M_{34}^2$. As $M_{13}M_{24}\neq 0$ by assumption, it follows that $\phi=0$. With these constraints, there is a Gr\"obner basis consisting of exactly four polynomials, one of which is $\phi$ and one of which is the image of $P$. Similar to Case (I), by using this image to substitute for $M_{13}M_{24}$ in each of the other polynomials, we see that they all vanish if and only if $\phi=0$.  This case thus gives the irreducible component 
\[
\LL_2=\V\left(M_{14},\,M_{23},\,M_{12}M_{34}-M_{13}M_{24},\,M_{12}^2+2M_{24}^2+\a M_{34}^2\right)
\]
\noindent of $\LL'(\a)$.\\[1mm]

\noindent\textbf{Case (III):} $M_{13}\neq 0 $ and $M_{23}=0=M_{24}$.

\noindent Under the hypotheses for this case, \ref{app2}.\ref{lp42} becomes $M_{13}M_{34}^3$, and so $M_{34}=0$.  Enforcing this condition leaves only three polynomials: $M_{1j}\left(M_{12}^3 - M_{13}^2 M_{14} + M_{12} M_{14}^2\right)$ for $j=2,3,4$.  Noting again that $M_{13}\neq 0$, we find that an irreducible component of $\LL'(\a)$ is described by 
\[
\LL_3=\V\left(M_{23},\,M_{24},\,M_{34},\, M_{12}^3 - M_{13}^2 M_{14} + M_{12} M_{14}^2 \right).
\]
\\[1mm]
\noindent\textbf{Case (IV):} $M_{13}M_{23}\neq 0$ and $M_{24}=0$.

\noindent By \ref{app2}.\ref{lp24}, these assumptions imply $M_{12}=0$, so by \ref{app2}.\ref{lp0}, $M_{14}=0$.  Applying these restrictions to the remaining polynomials in Section \ref{app2} yields a set of polynomials in which every element has 
\[ 
M_{13}^2 M_{23} - \a M_{23}^2 M_{34} + 2 M_{23} M_{34}^2 - \a M_{34}^3
\]
\noindent as a factor.  As one of the multipliers is $M_{13}$, this case describes the irreducible component 
\[
\LL_4=\V\left(M_{12},\,M_{14},\,M_{24},\,M_{13}^2 M_{23} - \a M_{23}^2 M_{34} + 2 M_{23} M_{34}^2 - \a M_{34}^3\right)
\]
\noindent of $\LL'(\a)$.\\[1mm]

\noindent\textbf{Case (V):} $M_{24} \neq 0$ and $ M_{13}=0=M_{23}$

\noindent With these assumptions, a Gr\"obner basis contains $M_{12}M_{34}$, and each element of this basis has either $M_{12}$ or $M_{34}$ as a factor.  It follows that, if $M_{12}=0=M_{34}$, then the line
\[
\LL_{5a}=\V\left(M_{12},\,M_{13},\,M_{23},\,M_{34}\right)
\]
\noindent is an irreducible component of $\LL'(\a)$.

Assuming $M_{12}\neq 0 $ forces $M_{34}$ to be zero, which by \ref{app2}.\ref{lp2}, implies that $M_{14}=0$.  By \ref{app2}.\ref{lp31} and \ref{app2}.\ref{lp39}, we have $M_{12}^2+2M_{24}^2=0$, in which case the zero set consists of two distinct points that belong to $\LL_{6b}$ in Case (VI) below.  If instead, $M_{34}\neq 0$ so that  $M_{12}=0$, then the zero locus is given by \ref{app2}.\ref{lp27} and \ref{app2}.\ref{lp45}, namely
\[
 M_{14}M_{34}\left(M_{14}^2+2M_{24}^2+\a M_{34}^2\right) \; = \;0\; = \; M_{34}\left(M_{14}^2+2M_{24}^2+\a M_{34}^2\,\right),
\] 
\noindent and so we obtain the irreducible component 
\[
\LL_{5b}=\V\left(M_{12},\,M_{13},\,M_{23},\,M_{14}^2+2M_{24}^2+\a M_{34}^2\right)
\]

\noindent of $\LL'(\a)$.  This component is a nonsingular conic since $M_{14}^2+2M_{24}^2+\a M_{34}^2$ is a rank-three quadratic form.  Both $\LL_{5a}$ and $\LL_{5b}$ lie in the plane $\V\left(M_{12},\,M_{13},\,M_{23}\right)$ and meet in two distinct points.\\[1mm]

\noindent\textbf{Case (VI):} $M_{23}M_{24} \neq 0$ and $ M_{13}=0$.

\noindent With these assumptions, \ref{app2}.\ref{lp11} implies that $M_{23}M_{24}^2 M_{34}=0$, and hence $M_{34}=0$.  Thus, using \ref{app2}.\ref{lp0}, we find that $M_{14}=0$.  Making these assignments produces exactly three polynomials: $M_{12}^2\psi,\, M_{12}M_{23}\psi$ and $M_{12}M_{24}\psi$, where $\psi=M_{12}^2+\a M_{23}^2+2 M_{24}^2$ (from \ref{app2}.\ref{lp39}, \ref{app2}.\ref{lp37}, and \ref{app2}.\ref{lp31} respectively).  It follows that this case yields two irreducible components of $\LL'(\a)$, namely
\[
\LL_{6a}=\V\left(M_{12},\,M_{13},\,M_{14},\,M_{34}\right),
\]
\noindent which is a line, and 

\[
\LL_{6b}=\V\left(M_{13},\,M_{14},\,M_{34},\,M_{12}^2+\a M_{23}^2+2 M_{24}^2\right),
\]
\noindent which is a nonsingular conic since $M_{12}^2+\a M_{23}^2+2 M_{24}^2$ is a rank-three quadratic form.  Both $\LL_{6a}$ and $\LL_{6b}$ lie in the plane $\V\left(M_{13},\,M_{14},\,M_{34}\right)$ and meet in two distinct points. \\[1mm]

We summarize the above discussion in the next result.  The reader should note that, in the following, ``spatial curve" indicates a nonplanar curve in $\PP^5$ that is contained in a linear subscheme of $\PP^5$ that is isomorphic to $\PP^3$.

\begin{thm}\label{8curves}
Let $\LL'(\a)$ denote the reduced variety of the line scheme, $\LL(\a)$, of $\A(\a)$.  For generic $\a$, $\LL'(\a)$ is the union of eight irreducible components:

\begin{enumerate}
\item[\textup{(I)}] $\LL_1 =\V\left(M_{13},\,M_{24},\,M_{12}M_{34}+M_{14}M_{23},\,M_{12}^2 + M_{14}^2 + \a M_{23}^2 - 2 M_{23} M_{34} +\a M_{34}^2\right),$ 
	      which is a spatial elliptic curve. 
\item[\textup{(II)}] $\LL_2  = \V\left(M_{14},\,M_{23},\,M_{12}M_{34}-M_{13}M_{24},\,M_{12}^2+2M_{24}^2+\a M_{34}^2\right),$
              which is a spatial rational curve with one singular point.
\item[\textup{(III)}] $\LL_3=\V\left(M_{23},\,M_{24},\,M_{34},\, M_{12}^3 - M_{13}^2 M_{14} + M_{12} M_{14}^2 \right),$ 
              which is a planar elliptic curve.
\item[\textup{(IV)}] $\LL_4=\V\left(M_{12},\,M_{14},\,M_{24},\,M_{13}^2 M_{23} - \a M_{23}^2 M_{34} + 2 M_{23} M_{34}^2 - \a M_{34}^3\right),$ 
              which is a planar elliptic curve.
\item[\textup{(Va)}] $\LL_{5a} = \V\left(M_{12},\,M_{13},\,M_{23},\,M_{34}\right),$ 
              which is a line.
\item[\textup{(Vb)}] $\LL_{5b} =\V\left(M_{12},\,M_{13},\,M_{23},\,M_{14}^2+2M_{24}^2+\a M_{34}^2\right),$ 
              which is a nonsingular conic.              
\item[\textup{(VIa)}] $\LL_{6a} = \V\left(M_{12},\,M_{13},\,M_{14},\,M_{34}\right),$ 
	      which is a line.
\item[\textup{(VIb)}] $\LL_{6b} = \V\left(M_{13},\,M_{14},\,M_{34},\,M_{12}^2+\a M_{23}^2+2 M_{24}^2\right),$ 
	      which is a nonsingular conic.
\end{enumerate}

\end{thm}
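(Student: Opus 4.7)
The plan is to compile the case analysis of Cases~(I)--(VI) above and then verify the stated geometric type of each irreducible component. First I would confirm that the scattered zero-dimensional solutions produced along the way are all absorbed into one of the eight components, so that the set-theoretic union $\LL_1\cup\cdots\cup\LL_{6b}$ accounts for all of $\LL'(\a)$. The five initial points with $M_{13}=M_{23}=M_{24}=0$ are handled as follows: the point $\V(M_{12},M_{13},M_{23},M_{24},M_{34})$ lies on $\LL_{5a}$; the two points with $M_{14}\pm iaM_{34}=0$ lie on $\LL_{5b}$ (the defining quadratic reduces correctly once $M_{24}=0$); and the two points with $M_{12}\pm iM_{14}=0$ satisfy $M_{12}(M_{12}^2+M_{14}^2)=0$ and so lie on the cubic locus $\LL_3$ once $M_{13}=0$. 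A parallel check shows the extra points in Cases~(I) and~(V) lie on $\LL_{6a}\cup\LL_{6b}$ as claimed in the case analysis.

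Next I would verify the geometric classifications. For $\LL_{5a}$ and $\LL_{6a}$, four independent linear forms cut out a $\PP^1$, giving lines. Each of $\LL_{5b}$ and $\LL_{6b}$ lies in a plane cut out by three linear forms, and the residual quadratic form is diagonal of rank three when $\a\ne 0$, yielding nonsingular conics. For $\LL_3$ and $\LL_4$, the Jacobian criterion applied to the residual cubic in the plane determined by the three linear forms shows that the only common zero of its partial derivatives is the origin, so each is a smooth planar cubic, hence a planar elliptic curve.

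For the spatial components, $\LL_1$ lies in the $\PP^3$ defined by $M_{13}=M_{24}=0$ as the intersection of a rank-four quadric and a quadric of full rank for generic~$\a$; a Jacobian computation should verify transversality, so $\LL_1$ is a smooth complete intersection of two quadrics in $\PP^3$, which is classically a spatial elliptic curve of degree four. For $\LL_2$, lying in the $\PP^3$ defined by $M_{14}=M_{23}=0$, the quadric $M_{12}^2+2M_{24}^2+\a M_{34}^2$ is a rank-three cone with vertex $(0:1:0:0)$ in coordinates $M_{12},M_{13},M_{24},M_{34}$, and this vertex also satisfies $M_{12}M_{34}-M_{13}M_{24}=0$. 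A Jacobian check will confirm that this is the unique singular point; projection from it to a complementary $\PP^2$ will then exhibit $\LL_2$ as rational. The main obstacle will be carrying out the Jacobian and singularity analyses uniformly on the generic locus $\a(1-\a^2)\ne 0$, particularly the verification that $\LL_2$ has exactly one singular point and that $\LL_1$ is everywhere transverse; nonplanarity for both spatial curves is automatic from their ambient $\PP^3$ not being forced into a further linear subspace by the defining equations.
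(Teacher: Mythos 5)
Your outline tracks the paper's own proof for seven of the eight components: the paper likewise takes the defining polynomials from the preceding case analysis, absorbs the sporadic zero-dimensional solutions into components already found (your attribution of the two points with $M_{12}\pm iM_{14}=0$ to $\LL_3$ is consistent -- they are exactly the points of $\LL_1\cap\LL_3$, so either attribution works), reads off $\LL_{5a},\LL_{6a}$ as lines and $\LL_{5b},\LL_{6b}$ as nonsingular conics from the rank-three quadratic forms, and proves smoothness of $\LL_3$, $\LL_4$ and of $\LL_1$ by Jacobian computations on the generic locus $\a(1-\a^2)\neq 0$. For $\LL_1$ the paper obtains irreducibility by observing that a point common to two components would force the Jacobian rank below two; your appeal to the classical fact that a smooth $(2,2)$ complete intersection in $\PP^3$ is an elliptic curve is an acceptable substitute (it rests on connectedness of complete intersections). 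Your nonplanarity remark should, however, be an argument rather than a claim of automaticity; e.g., a plane section of the irreducible quadric $\V(f_1)$, resp.\ $\V(g_2)$, is a conic and cannot contain a degree-four curve.

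The one genuine gap is the irreducibility of $\LL_2$, which is part of the statement and is not delivered by your two steps. Knowing that $(0{:}1{:}0{:}0)$ is the unique point where the Jacobian of $\{M_{12}M_{34}-M_{13}M_{24},\;M_{12}^2+2M_{24}^2+\a M_{34}^2\}$ drops rank does not exclude reducibility: a $(2,2)$ intersection in $\PP^3$ can a priori be, for instance, a twisted cubic together with one of its tangent lines, which also has exactly one singular point; and your projection step, as phrased, presupposes irreducibility (otherwise ``$\LL_2$ is rational'' is not even well posed and the degree count for the projection breaks into components). The paper closes this by localizing at the singular point and checking that the local equation $x^2+2x^2z^2+\a z^2$ is irreducible (citing the dissertation \cite{DT}), so only one component passes through the singular point, and it then proves rationality by an explicit birational parametrization. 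Your projection idea can be upgraded to do both jobs at once, but you must add the verification: $\V(g_2)$ is a cone with vertex $(0{:}1{:}0{:}0)$ over the nonsingular conic $\V(M_{12}^2+2M_{24}^2+\a M_{34}^2)\subset\PP^2$, and on the ruling over a base point $(q_{12}{:}q_{24}{:}q_{34})$ the form $f_2$ restricts to $t\,(t\,q_{12}q_{34}-s\,q_{24})$, which is never identically zero for $\a$ generic; hence no ruling lies on the curve, a general ruling meets it in exactly one point besides the vertex, and the projection from the vertex is birational onto the base conic -- giving irreducibility and rationality simultaneously. With that verification inserted (and the nonplanarity remark justified), your argument is complete and differs from the paper's only in replacing the localization-plus-parametrization treatment of $\LL_2$ by projection from the vertex of the quadric cone.
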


\begin{pf}
 The polynomials describing $\LL'(\a)$ are given in the preceding work, as is the geometric description of $\LL_{5a},\,\LL_{5b},\,\LL_{6a}$ and $\LL_{6b}$.  Here, we give the geometric description of the remaining components.  Observe that the five distinct points discussed at the start of Section \ref{clines} are contained in $\LL_{5a}$, $\LL_{5b} $, and $\LL_1$. 

(I) Write $f_1=M_{12}M_{34}+M_{14}M_{23}\,$ and $\,g_1=M_{12}^2+M_{14}^2+\a M_{23}^2-2M_{23}M_{34}+\a M_{34}^2$ viewed in the homogeneous coordinate ring $\kk[M_{12}, \, M_{14}, \, M_{23}, \, M_{34}]$ of $\PP^3$.  Since $\a$ is generic, the Jacobian matrix

\[
\footnotesize
J_1=
\begin{bmatrix}
M_{34} & M_{23} & M_{14} & M_{12} \\[3mm]
2 M_{12}& 
2 M_{14}& 
2( \a M_{23}-M_{34} )& 
2( \a M_{34}-M_{23} )  
\end{bmatrix}
\]
\quad\\[-1mm] of the system $\{f_1,\,g_1\}$ has rank two on $ \V(f_1,g_1)$. Hence, $\V(f_1,\,g_1)\subset\PP^3$ is nonsingular and reduced. Referencing the proofs of \cite[Proposition 2.5]{Smith.Staff} and \cite[Theorem 3.1]{ChV}, if $\V(f_1,\,g_1)$ is not irreducible, then there exists a point in the intersection of two of its irreducible components and $J_1$ would have rank at most one at that point, which is false.  Thus, $\V(f_1,\,g_1)$ is a reduced, nonsingular irreducible elliptic curve, and the same is true for $\LL_1$.

(II) Let $f_2=M_{12}M_{34}-M_{13}M_{24}$ and $g_2=M_{12}^2+2M_{24}^2+\a M_{34}^2$ viewed in the homogeneous coordinate ring $\kk[M_{12}, \, M_{13}, \, M_{24}, \, M_{34}]$ of $\PP^3$.  The Jacobian matrix of the system $\{f_2,\,g_2\}$ is

\[
\footnotesize
J_2=
\begin{bmatrix}
M_{34} & -M_{24} & -M_{13} & M_{12} \\[3mm]
2M_{12}& 
0 & 
4M_{24}& 
2\a M_{34} 
\end{bmatrix},
\]
\quad\\[-1mm] and $\varepsilon_2=\V(M_{12},\,M_{24},\,M_{34})$ is the unique point
in $\V(f_2,\,g_2)$ at which rank$(J_2)<2$; thus, $\varepsilon_2$ is the unique
singular point of the curve $\V(f_2,\,g_2)\subset\PP^3$.  Referencing
\cite{Smith.Staff} and \cite{ChV} as in Case~(I), 
if $\V(f_2,\,g_2)$ is not irreducible, then at least two of its components intersect at points $p$ such that rank$(J_2|_p)\leq 1$;  the above discussion implies that such components intersect at $\varepsilon_2$. We localize $\V(f_2,\,g_2)$ at $\varepsilon_2$, and, in so doing, we write $x=M_{12}/M_{13}$, $y=M_{24}/M_{13}$, and $z=M_{34}/M_{13}$.  This process gives $f_2|_{M_{13}=1}=\hat{f}= xz-y\;$ and $\;g_2|_{M_{13=1}}=\hat{g}=x^2+2y^2+\a z^2$. Substituting for~$y$ yields
 \[
 \V(\hat{f},\,\hat{g})=\V(x^2+2x^2z^2+\a z^2)\subset \mathbb{A}^2 . 
 \]
\noindent In \cite{DT} it is shown that $\V(\hat{f},\,\hat{g})$ is irreducible in $\mathbb{A}^2$, and so only one component passes through~$\varepsilon_2$.  It follows that $\V(f_2,\,g_2)$ is irreducible in $\PP^3$.  The multiplicity of $\V(f_2,\,g_2)$ at $\varepsilon_2$ is given by $\text{deg}(x^2+ \a z^2)=2$, and hence $\varepsilon_2$ is a double point with two distinct tangent lines. In particular, $\V(f_2,\,g_2)$ is a curve on the rank-three quadric
\[
\V\left(M_{12}^2+2M_{24}^2+\a M_{34}^2\right)\subset \PP^3,
\]
\noindent and it self-intersects at exactly one point, $\varepsilon_2$, which is the unique singular point of $\V\big(M_{12}^2+2M_{24}^2+\a M_{34}^2\big)$.  Hence, $\LL_2$ is an irreducible degree-four curve with unique singular point $E_2=(0,\,1,\,0,\,0,\,0,\,0)\in\PP^5$ that is a double point.  Moreover, $\LL_2$ is a rational curve, which we now prove using $\V(\hat{f},\,\hat{g})$.

Let $i,\, a,\, b \in \kk$ be such that $i^2=-1,\,a^2=\a,\,b^2=2$. The localized curve $\V(\hat{f},\,\hat{g})$ is birationally equivalent to $\PP^1$, which can be seen by using the maps 
\[
\chi: \V(\hat{f},\,\hat{g}) \dashrightarrow \mathbb{A}^1 , \quad \text{defined by }\quad \chi(x,\,z)=\frac{-x(bz + i )}{a z},
\]
\noindent for $z\neq 0$, and
\[
\delta: \mathbb{A}^1\dashrightarrow \V(\hat{f},\,\hat{g}) , \quad \text{defined by }\quad\delta(t)=\left( \frac{a(1-t^2)}{2bt},\,\,\frac{1-t^2}{ib (1+t^2)} \right) ,
\]

\noindent for $t(t^2+1)\neq 0$. These maps are defined on almost all of their respective domains, and it is straightforward to check that $\chi\circ\delta=1=\delta\circ\chi$.

(III) Viewing $h=M_{12}^3 - M_{13}^2 M_{14} + M_{12} M_{14}^2$ as a polynomial in $\kk[M_{12},\,M_{13},\,M_{14}]$, the Jacobian matrix of $h$ is a $1\times 3 $ matrix that has rank one at all points of $\V(h)$. Thus, $\V(h)$ is nonsingular in the plane $\V\left(M_{23},\,M_{24},\,M_{34}\right)$, and so $\LL_3$ is a planar elliptic curve.

(IV) Analogously to (III), view $w=M_{13}^2 M_{23} - \a M_{23}^2 M_{34} + 2 M_{23} M_{34}^2 - \a M_{34}^3$ as a polynomial in $\kk[M_{13},\,M_{23},\,M_{34}]$, and consider its Jacobian matrix.  A computation shows that this $1\times 3$  matrix has rank one at all points of $\V(w)$.  Thus, $\V(w)$ is nonsingular in the plane $\V\left(M_{12},\,M_{14},\,M_{24}\right)$, and so $\LL_4$ is a planar elliptic curve.
\end{pf}

\begin{thm}\label{redthm}
For generic $\a$, the line scheme $\LL(\a)$ is a reduced scheme of degree twenty.
\end{thm}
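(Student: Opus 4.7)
The plan is to compute $\deg \LL'(\a) = 20$ by summing the degrees of the eight irreducible components identified in Theorem \ref{8curves}, and then to combine this with an a priori upper bound $\deg \LL(\a) \leq 20$ to conclude that $\LL(\a) = \LL'(\a)$ scheme-theoretically.

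For the degree sum, I would read the degrees of the components from their defining equations. The lines $\LL_{5a}$ and $\LL_{6a}$ each have degree $1$; the nonsingular plane conics $\LL_{5b}$ and $\LL_{6b}$ each have degree $2$; and the plane cubic curves $\LL_3$ and $\LL_4$ each have degree $3$. Each of $\LL_1$ and $\LL_2$ is the zero locus, inside a linear $\PP^3 \subset \PP^5$ cut out by the two vanishing Pl\"ucker coordinates, of a pair of quadric forms; the proof of Theorem \ref{8curves} has already established that these intersections are proper and one-dimensional, so by Bezout each has degree $2 \cdot 2 = 4$. Summing yields $\deg \LL'(\a) = 2(4) + 2(3) + 2(2) + 2(1) = 20$.

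For the upper bound, I would appeal to the Shelton--Vancliff framework of \cite{SV2}: for a quadratic algebra satisfying the properties of $\A(\a)$ (a domain of Hilbert series $(1-t)^{-4}$ with one-dimensional line scheme), the Pl\"ucker polynomial together with the forty-five quartics cuts out a subscheme of $\PP^5$ of degree at most $20$. Combined with the trivial inequality $\deg \LL(\a) \geq \deg \LL'(\a) = 20$, this yields $\deg \LL(\a) = 20$, which forces geometric multiplicity one along every component of $\LL'(\a)$ and therefore reducedness.

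The main obstacle is securing the bound $\deg \LL(\a) \leq 20$ in a form applicable to $\A(\a)$. If a direct citation does not suffice, the fallback is a local reducedness check carried out component by component: at a generic point of each $\LL_i$ compute the Jacobian of the forty-six defining polynomials of Section \ref{app2} and verify its rank equals $4$, certifying local smoothness and hence local reducedness on a dense open subset of $\LL_i$. Ruling out embedded $0$-dimensional primes at the finitely many exceptional points (most notably the node $\varepsilon_2$ of $\LL_2$, and the pairs of points where the line and conic meet inside $\LL_{5a}\cup\LL_{5b}$ and $\LL_{6a}\cup\LL_{6b}$) would be handled either by the tangent-cone computations already performed in the proof of Theorem \ref{8curves} or, in the worst case, by a direct primary-decomposition verification in Mathematica.
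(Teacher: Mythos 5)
Your degree count for $\LL'(\a)$ is right and matches the paper: two quartics ($\LL_1,\LL_2$, each a $(2,2)$ intersection in a $\PP^3$), two plane cubics, two conics, two lines, total $20$. But there is a genuine gap in the main line of your argument: from $\deg\LL(\a)=\deg\LL'(\a)=20$ you conclude ``multiplicity one along every component and therefore reducedness.'' Equality of degrees only gives multiplicity one along the one-dimensional components, i.e.\ \emph{generic} reducedness; the degree of a one-dimensional scheme is blind to embedded zero-dimensional primes, so reducedness does not follow. The paper closes exactly this hole by invoking \cite[Lemma 3.2]{ChV}, which applies because $\A(\a)$ satisfies the same hypotheses and shows $\LL(\a)$ has no embedded points; only then does $\deg\LL(\a)=20=\deg\LL'(\a)$ force $\LL(\a)=\LL'(\a)$. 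Your fallback does mention embedded primes, but the tangent-cone computations you point to were performed for the \emph{reduced} curves $\V(f_2,g_2)$ etc.\ in Theorem \ref{8curves}, not for the scheme cut out by the forty-six polynomials, so they cannot certify absence of embedded primes of $\LL(\a)$; that leaves only your ``worst case'' primary decomposition, which is a different (computational) argument rather than a repair of the degree argument.

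A secondary issue is the source of the global degree statement. You attribute an a priori bound $\deg\LL(\a)\le 20$ to \cite{SV2}, but no such bound is cited there; the paper instead uses \cite[Corollary 3.7]{CSV}, which gives $\deg\LL(\a)=20$ exactly, given $\dim\LL(\a)=1$ and char$(\kk)\neq 2$. With that citation your ``$\le 20$ plus $\ge 20$'' bookkeeping becomes unnecessary, but the embedded-point step above is still indispensable and is the real missing ingredient in your proposal.
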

\begin{pf}
Theorem \ref{8curves} shows that deg($\LL'(\a))=20$, and, since $\A(\a)$ satisfies the same conditions as those in \cite[Lemma 3.2]{ChV}, $\LL(\a)$ contains no embedded points. By \cite[Corollary 3.7]{CSV}, deg($\LL(\a))=20$ since dim($\LL(\a))=1$ and char($\kk)\neq 2$. Therefore, $\LL(\a)$ is given by $\LL'(\a)$ and hence is a reduced scheme.
\end{pf}

With the description of $\LL(\a)$ completed, we compute the points of intersection for the irreducible components of this scheme. We use the notation \begin{center} $E_i\; = \; \left(  \delta_{i1}    ,\,   \delta_{i2}    ,\,    \delta_{i3}   ,\,    \delta_{i4}   ,\,   \delta_{i5}   ,\,  \delta_{i6}  \right),$ \end{center} \noindent where $\delta_{ij}$ is the Kronecker-delta symbol.

\begin{cor}\label{intpoints}  The components of $\LL(\a)$ intersect at twenty
distinct points. The components that meet at exactly one point are given by{\rm :}
\\[-5mm]
\begin{center}
$\LL_2\cap\LL_3 \;=\; \LL_2\cap\LL_4  \;=\;  \LL_3 \cap\LL_4 \;=\; \lbrace E_2 \rbrace ,$ 
\\[2mm]
$\LL_{3}\cap\LL_{5a}\; = \; \lbrace E_3 \rbrace ,  \quad  \LL_{4}\cap\LL_{6a}\; = \;\lbrace  E_4 \rbrace ,   \,\,  $ and $\,\, \LL_{5a}\cap\LL_{6a}\;  = \; \lbrace E_5 
\rbrace${\rm ;}
\end{center}
\quad\\[-5mm]
\noindent and the components that meet at two distinct points are given by{\rm :}
\\[-2mm]
\begin{center}
\begin{tabular}{rclrcl}
$\LL_1\cap\LL_3$ & $=$ & $\lbrace ( 1,\,0,\,\ \pm i,\,0,\,0,\, 0  )\rbrace $,\hspace*{9mm} &
$\LL_2\cap\LL_{5b}$ & $=$& $\lbrace ( 0,\,0,\,0,\,0,\, a,\, \pm ib ) \rbrace $,
\\[2.5mm] 
$\LL_1\cap\LL_4$ & $=$& $\lbrace  ( 0,\,0,\,0 ,\, \a ,\,0,\, 1 \pm d ) \rbrace $, &
$\LL_2\cap\LL_{6b}$ &$=$& $ \lbrace (  b,\,0,\,0,\,0,\,\pm i,\, 0 )\rbrace $, \\[2.5mm]
$\LL_1\cap\LL_{5b}$ &$=$&  $\lbrace ( 0,\,0,\, \pm i a ,\,0,\,0,\, 1 ) \rbrace $,&
$\LL_{5a}\cap\LL_{5b}$ &$=$& $\lbrace( 0,\,0,\,b,\,0,\,\pm i,\, 0) \rbrace $, \\[2.5mm]
$\LL_1\cap\LL_{6b}$ &$=$&  $\lbrace ( \pm i a ,\,0,\,0 ,\, 1 ,\,0,\, 0 )\rbrace $, & $\LL_{6a}\cap\LL_{6b}$ &$=$& $\lbrace( 0,\,0,\,0,\,b,\,\pm ia,\, 0) \rbrace,$
\end{tabular}
\end{center}
\quad\\[-0mm]
\noindent where $i,\, a,\, b, \, d \in \kk$ and $i^2=-1,\,a^2=\a,\,b^2=2$, and $d^2=1-\a^2$.  Moreover, all other pairwise intersections are empty.
\end{cor}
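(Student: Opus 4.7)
The approach is brute-force case analysis on the $\binom{8}{2}=28$ ordered pairs of components. For each pair $\LL_i\cap\LL_j$, I will impose the linear vanishing conditions of both components simultaneously, and then substitute the result into the remaining cubic or quartic defining equations of the two components, using throughout the genericity condition $\a(1-\a^2)\ne 0$. Once the intersection points are enumerated, distinctness will follow by inspecting the supports of the twenty listed coordinate tuples in $\PP^5$.

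The first task is to rule out the fourteen pairs that do not appear in the statement. In each such case, the combined linear hyperplane conditions of $\LL_i$ and $\LL_j$ already force so many coordinates to vanish that the surviving quadratic or cubic forces an additional coordinate to vanish as well, leaving only the origin of $\kk^6$, which is not a point of $\PP^5$. For instance, in $\LL_1\cap\LL_2$ the four conditions $M_{13}=M_{14}=M_{23}=M_{24}=0$ reduce the remaining constraints to $M_{12}M_{34}=0$ and $M_{12}^2+\a M_{34}^2=0$, which, by genericity of $\a$, force $M_{12}=M_{34}=0$. A brief enumeration of this type disposes of all fourteen empty pairs.

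The second task handles the six single-point intersections. In each such pair, the combined linear conditions pin down a single coordinate axis (containing $E_2$, $E_3$, $E_4$, or $E_5$), and the remaining cubic or quartic defining polynomials vanish automatically on that axis after substitution; so the listed point is the only intersection.

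The third and most substantive task is the eight pairs giving two distinct points. Here the combined linear conditions reduce the remaining equations to a single rank-two binary quadratic form in two of the Pl\"ucker coordinates, namely one of
\[
M_{14}^2+\a M_{34}^2,\;\; 2M_{24}^2+\a M_{34}^2,\;\; M_{12}^2+\a M_{23}^2,\;\; M_{14}^2+2M_{24}^2,\;\; M_{12}^2+2M_{24}^2,\;\; \a M_{23}^2+2M_{24}^2,
\]
each of which splits into two distinct linear factors by genericity of $\a$ and char$(\kk)=0$. The genuinely non-routine case is $\LL_1\cap\LL_4$, where after setting $M_{12}=M_{13}=M_{14}=M_{24}=0$ the quartic of $\LL_1$ becomes $\a M_{23}^2 -2M_{23}M_{34}+\a M_{34}^2=0$, yielding $M_{23}/M_{34}=(1\pm d)/\a$ with $d^2=1-\a^2$; I must additionally check that the cubic of $\LL_4$ is satisfied, but with $M_{13}=0$ that cubic factors (up to sign) as $M_{34}(\a M_{23}^2-2M_{23}M_{34}+\a M_{34}^2)$, so it vanishes automatically on the two solutions. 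The main obstacle is simply carrying out all twenty-eight case checks without error; once completed, the twenty listed points are visibly distinct by their coordinate supports, proving the statement.
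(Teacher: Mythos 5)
Your proposal is correct and is essentially the paper's own argument: the paper proves Corollary \ref{intpoints} by direct computation, and your brute-force run through all $\binom{8}{2}=28$ pairs, using genericity of $\a$, the factorization of the cubic of $\LL_4$ against the quadric of $\LL_1$ in the $\LL_1\cap\LL_4$ case, and distinctness via coordinate supports, reproduces it faithfully. Only two minor bookkeeping slips occur, which the computation itself would correct: your list of six binary quadratic forms omits $M_{12}^2+M_{14}^2$, which is what arises for $\LL_1\cap\LL_3$, and in the single-point cases $\LL_2\cap\LL_3$ and $\LL_2\cap\LL_4$ the linear conditions alone leave a $\PP^1$, with the quadratic form of $\LL_2$ (not automatic vanishing) needed to cut it down to $E_2$.
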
 

\begin{pf}
The result follows from direct computation.
\end{pf}

\begin{rmks}\label{midlrmks}

\begin{enumerate}[(a)]
\itemsep1mm
\item[]

\item In \cite{DT}, the multiplicity of these intersection points is computed using a localization argument; the point $E_2$, which is the unique singular point of $\LL_2$, lies on three distinct components and has multiplicity four, while all other intersection points have multiplicity two. 

\item In order to relate the intersection points of the components of $\LL(\a)$ to properties of $\A(\a)$, we now consider the intersection of certain right ideals in $\A(\a)$ and a family of normalizing sequences of $\A(\a)$. For any fixed $\delta,\,\epsilon \in \kk\setminus{\{0\}}$, the set
\[
\{ x_2^2, \ x_3^2,\  x_3 x_4 + \delta x_4 x_3 ,\  x_1 x_2 + \epsilon x_2 x_1 \}
\]
\noindent is a normalizing sequence of $A(\a)$. Let $J$ denote the ideal generated
by the elements of this normalizing sequence, and, given a point $p \in
\V(P)\subset \PP^5$ (i.e., a line in $\PP^3$), let $K_p$ denote its corresponding
right ideal in $A(\a)$. Motivated by \cite[\S 3.3B]{ChTh}, a calculation in
\cite{DT} shows that\\[-7mm]
\begin{center}
dim$_{\kk}\left( J_2 \cap K_p \right) = 2$ \quad for each point $p$ given by 
Corollary~\ref{intpoints},
\end{center}
\noindent where $J_2 $ denotes the span of the homogeneous degree-two elements in $J$.
For example, if $\delta  = -1$ and $\epsilon = 1$, the corresponding normalizing 
sequence is 
\[
\{ x_2^2,\ x_3^2,\  x_3 x_4 - x_4 x_3 , \ x_1 x_2 + x_2 x_1 \},
\]
\noindent and, to the two points $E_4 \pm i a E_1\in\LL_1 \cap \LL_{6b}$, we
associate the right ideals: \\[2mm]
\centerline{$\;x_4 A+\left(x_1\pm i ax_3\right)A=I_{\pm}$\,\!\! ;} \\[2mm]
a straightforward calculation shows that $I_{\pm}\cap J_2 = \kk \left( x_3 x_4 - x_4 x_3 \right) \oplus \kk x_2^2$. 
Moreover, based upon our calculations with various points of $\V(P)$ and any $\delta,\,\epsilon \in \kk\setminus{\{0\}}$, we conjecture that
\begin{center}
\begin{tabular}{rcl}

dim$_{\kk} \left( J_2 \cap K_p \right)  $ &  $= \; 1$ & if $ p$ lies on only one 
irreducible component of $\LL(\a)$, and \\[2mm]

dim$_{\kk} \left( J_2 \cap K_p \right) $ &  $ \geq \; 3 $ & if $ p \in \V(P) \setminus \LL(\a)$.

\end{tabular}

\end{center}

\end{enumerate}
\end{rmks}


\bigskip
\section{The Lines in $\PP^3$ Parametrized by $\LL(\a)$} \label{sec4}
In this section, we describe the lines in $\PP(V^\text{*})$ that are parametrized by the scheme $\LL(\a)$. After discussing how Pl\"ucker coordinates evaluate on a line in $\PP^3$, we give a description of the lines in $\PP^3$ that are parametrized by $\LL(\a)$ in Section \ref{p3lines}. In Section \ref{ptsonlines}, we discuss how many of those lines are incident to each point of the scheme $\p$.  Throughout, we identify $\PP(V^\text{*})$ with~$\PP^3$ since dim$(V)=4$, and we continue to use the notation $e_1,\ldots,e_4$ introduced in Section \ref{sec2}. 


\bigskip
\subsection{The Lines in $\PP^3$}\hfill\label{p3lines}
 
In this subsection, we describe the lines in $\PP^3$ that are parametrized by $\LL(\a)$.  For completeness, we first discuss how the Pl\"ucker coordinates relate to lines in $\PP^3$; more details may be found in \cite[\S 8.6]{CLO}.  Let $\ell$ be any line in $\PP^3$, and let $a=(a_1,\ldots,a_4)$ and $b=(b_1,\ldots,b_4)$ be two distinct points on $\ell$.  By representing $\ell$ as the $2\times 4$ matrix
\[
\begin{bmatrix}
a_1 & a_2 & a_3 & a_4 \\[2mm]
b_1 & b_2 & b_3 & b_4 
\end{bmatrix}
\]
\noindent of rank two, each point on $\ell$ may be realized, in homogeneous 
coordinates, as a linear combination of the rows of this matrix. Moreover,
infinitely many such matrices may be associated to any line $\ell$ and they are
all related by row operations.  The evaluation of the Pl\"ucker coordinate~$M_{ij}$ 
on the above matrix is defined to be the $2\times 2$ minor $a_ib_j-a_jb_i$. 
Notice that the Pl\"ucker polynomial $P=M_{12}M_{34}-M_{13}M_{24}+M_{14}M_{23}$ vanishes on this matrix, and hence $\V(P) \subset \PP^5 $ parametrizes all lines in $\PP^3$.

\bigskip

\noindent \textbf{(I)}
\ By Theorem \ref{8curves}, the component $\LL_1$, given by
\[
\LL_1 = \V\left(M_{13},\,M_{24},\,M_{12}M_{34}+M_{14}M_{23},\,M_{12}^2 + M_{14}^2 + \a M_{23}^2 - 2 M_{23} M_{34} +\a M_{34}^2\right),
\]
\noindent is a spatial elliptic curve in $\PP^5$.  Any line $\ell\subset
\PP(V^\text{*})$ 
corresponding to a point of $\LL_1$ can be represented by the $2\times 4$ matrix
\[
\begin{bmatrix}
a_1 & 0 & a_3 & 0 \\[2mm]
0 & b_2 & 0 & b_4 
\end{bmatrix}
\]
\quad\\ of rank two, where $a_j,\,b_j \in \kk$ for all $j$ such that 
$a_1^2 b_2^2 + a_1^2 b_4^2 + \a b_2^2 a_3^2 + 2 b_2 a_3^2b_4 + \a a_3^2 b_4^2=0$. If $p\in\ell$, then $p=(\mu_1 a_1,\,\mu_2 b_2,\,\mu_1 a_3,\,\mu_2 b_4)$ for some $\mu=(\mu_1,\,\mu_2)\in\PP^1$, so, for any $\mu\in\PP^1$, $p$ lies on the quartic surface
\[
\V\left(x_1^2 x_2^2 + x_1^2 x_4^2 + \a x_2^2 x_3^2 + 2 x_2 x_3^2x_4 + \a x_3^2 x_4^2\right)
\]
\noindent in $\PP^3$; thus $\ell$ lies on this surface.  Hence, $\ell$ corresponds to a point of $\LL_1$ if and only if 
\[
\ell = \V\left(  x_1 - \gamma x_3, \, x_2- \zeta x_4  \right),
\]
\noindent where $\gamma,\,\zeta \in \PP^1$ are such that $(\gamma^2+\a)(\zeta^2+1)+2\zeta=0$.

\bigskip \noindent \textbf{(II)}
	\ The component $\LL_2$ is the spatial rational curve given by 
\[
\LL_2  = \V\left(M_{14},\,M_{23},\,M_{12}M_{34}-M_{13}M_{24},\,M_{12}^2+2M_{24}^2+\a M_{34}^2\right).
\]
\noindent Any line $\ell \subset \PP(V^\text{*})$ 
corresponding to a point of $\LL_2$ can be represented by the rank-two $2\times 4$ matrix
\[
\begin{bmatrix}
a_1 & 0 & 0 & a_4 \\[2mm]
0 & b_2 & b_3 & 0 
\end{bmatrix},
\]  
\quad\\where $a_j,\,b_j \in \kk$ for all $j$ such that $a_1^2b_2^2+2b_2^2a_4^2+\a b_3^2a_4^2=0$.  Any point $p\in\ell$ is of the form $p=(\mu_1 a_1,\,\mu_2 b_2,\,\mu_2 b_3,\,\mu_1 a_4)$ for some $\mu=(\mu_1,\,\mu_2)\in\PP^1$, so, for any $\mu\in\PP^1$, $p$ lies on the quartic surface 
\[
\V\left(x_1^2x_2^2+2x_2^2x_4^2+\a x_3^2x_4^2\right)
\]
in $\PP^3$; hence, $\ell$ lies on this quartic surface.  Thus, $\ell$ corresponds to a point of $\LL_2$ if and only if
\[
\ell=\V\left(x_1-\beta x_4,\, x_3-\omega x_2\right),
\]
\noindent where $\beta,\,\omega\in\PP^1$ are such that $\a\omega^2 + \beta^2+2 = 0$.

\bigskip \noindent \textbf{(III)}
	\ The component $\LL_3$ is given by
	\[
\V\left(M_{23},\,M_{24},\,M_{34},\, M_{12}^3 - M_{13}^2 M_{14} + M_{12} M_{14}^2 \right),
	\]
which is a planar elliptic curve.  Any line in~$\PP(V^\text{*})$ 
parametrized by~$\LL_3$ can be represented by the rank-two matrix
\[
\begin{bmatrix}
0 & a_2 & a_3 & a_4 \\[2mm]
1 & 0 & 0 & 0 
\end{bmatrix},
\]
\quad\\[-1mm]
where $a_j\in \kk$ for all $j$ such that $a_2^3-a_3^2a_4+a_2a_4^2=0$. Thus, $\LL_3$ parametrizes all lines 
in~$\PP(V^\text{*})$ 
that pass through~$e_1$ and meet the planar curve 
$\V\left(x_1,\,x_2^3-x_3^2x_4+x_2x_4^2\right)$.  Note that this planar curve is nonsingular (since char$(\kk)= 0$) and hence is an elliptic curve.

\bigskip \noindent \textbf{(IV)}
	\ The planar elliptic curve 
	\[
	 \LL_4 = \V\left(M_{12},\,M_{14},\,M_{24},\,M_{13}^2 M_{23} - \a M_{23}^2 M_{34} + 2 M_{23} M_{34}^2 - \a M_{34}^3\right)
	\]
\noindent parametrizes lines 
in~$\PP(V^\text{*})$ 
that can be represented by the rank-two matrix
\[
\begin{bmatrix}
a_1 & a_2 & 0 & a_4 \\[2mm]
0 & 0 & 1 & 0 
\end{bmatrix},
\]
\noindent where  $a_j\in \kk$ for all $j$ such that $ a_1^2a_2+\a a_2^2a_4+2a_2a_4^2+\a a_4^3=0$; these lines are those passing through~$e_3$ that meet the planar curve $\V\left(x_3,\,x_1^2x_2+\a x_2^2x_4+2x_2x_4^2+\a x_4^3\right)$.  This planar curve is nonsingular since $\a$ is generic (and char$(\kk)= 0$) and so is an elliptic curve.

\bigskip \noindent \textbf{(V)}
	\ The lines parametrized by $\LL_5$ consist of two distinct components:
\[
\LL_{5a} = \V\left(M_{12},\,M_{13},\,M_{23},\,M_{34}\right) \quad \text{and}\quad \LL_{5b} =\V\left(M_{12},\,M_{13},\,M_{23},\,M_{14}^2+2M_{24}^2+\a M_{34}^2\right),
\] 
\noindent where $\LL_{5a}$ is a line and $\LL_{5b}$ is a nonsingular conic.  Any line 
in~$\PP(V^\text{*})$ 
corresponding to a point of $\LL_{5a}$ can be represented by the rank-two matrix 
\[
\begin{bmatrix}
a_1 & a_2 & 0 & 0 \\[2mm]
0 & 0 & 0 & 1
\end{bmatrix},
\] 
\noindent which describes all lines lying on $\V(x_3)$ that contain $e_4$.  The lines given by $\LL_{5b}$ are represented by the rank-two matrix 
\[
\begin{bmatrix}
a_1 & a_2 & a_3 & 0 \\[2mm]
0 & 0 & 0 & 1
\end{bmatrix},
\] 
\quad\\[-1mm]
where $a_j\in \kk$ for all $j$ such that $a_1^2+2a_2^2+\a a_3^2=0$.  It follows that $\LL_{5b} $ parametrizes all lines 
in~$\PP(V^\text{*})$ 
that lie on the singular rank-three quadric $\V(x_1^2+2x_2^2+\a x_3^2)$ and pass through its unique singular point,~$e_4$.

\bigskip \noindent \textbf{(VI)}
	\ Similar to Case (V), $\LL_6$ consists of the two components
\[
\LL_{6a} = \V\left(M_{12},\,M_{13},\,M_{14},\,M_{34}\right) \quad \text{and}\quad \LL_{6b} =\V\left(M_{13},\,M_{14},\,M_{34},\,M_{12}^2+\a M_{23}^2+2 M_{24}^2\right),
\]
\noindent where $\LL_{6a}$ is a line and $\LL_{6b}$ is a nonsingular conic. By comparison with $\LL_{5a}$, we find that $\LL_{6a}$ parametrizes all lines on the plane $\V(x_1)$ that contain $e_2$. Similarly, our description of $\LL_{5b}$ indicates that $\LL_{6b}$ parametrizes all lines 
in~$\PP(V^\text{*})$ 
that lie on the singular rank-three quadric $\V(x_1^2+\a x_3^2 +2 x_4^2)$ and pass through its unique singular point, $e_2 $.


\bigskip

\subsection{The Lines of the Line Scheme That Contain Points of the Point Scheme}\hfill\label{ptsonlines}

In this subsection, we compute how many lines in $\PP(V^{\text{*}})$ parametrized by~$\LL(\a)$ contain a given point of the scheme $\p$. By \cite[Remark~3.2]{SV1}, if the number of such lines is finite,
then it is six (counting multiplicity); hence, the generic case is considered to be six distinct lines. Recall from Proposition \ref{ptS} that $\p=\bigcup _{i=0}^4 \Z_i $ and $\Z_0 = \lbrace e_1,\ldots,e_4\rbrace $. 

\begin{thm}\label{6lines}
Let $\a$ be generic.
\begin{enumerate}
\itemsep1mm
\item[\textup{(a)}]
     For any $j \in \{ 1, \ldots, 4\}$, $e_j$ lies on infinitely many lines in 
     $\PP(V^{\text{\rm *}})$ 
     that are parametrized by $\LL(\a)$.
\item[\textup{(b)}]
     Each point of $\Z_1 \cup \Z_2 $ lies on exactly six distinct lines of
     those parametrized by $\LL(\a)$.
\item[\textup{(c)}] 
	Each point of $\Z_3 \cup \Z_4 $ lies on four distinct lines of those parametrized by $\LL(\a)$; two of which have multiplicity two and two of which have multiplicity one.

\end{enumerate}
\end{thm}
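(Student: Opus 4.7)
The plan is to use the explicit parametric descriptions of the lines in each component of $\LL(\a)$ from Section~\ref{p3lines}, and, for each closed point $p = (\lambda_1, \lambda_2, \lambda_3, \lambda_4) \in \p$, enumerate the lines of each component that contain $p$. For the six components $\LL_3, \LL_4, \LL_{5a}, \LL_{5b}, \LL_{6a}, \LL_{6b}$, every line passes through a fixed coordinate vertex $e_m$, so the unique candidate line through $p$ is $\overline{e_m p}$, and its membership in the component reduces to a single scalar test (lying on the appropriate quadric or planar cubic). For $\LL_1$ and $\LL_2$, the two defining linear forms determine the parameters $(\gamma,\zeta)$ (resp.\ $(\beta,\omega)$) from $p$, after which membership in the component reduces to a single polynomial condition that will be verified using the equations defining the stratum $\Z_i$ containing $p$.

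Part~(a) will be essentially immediate from Section~\ref{p3lines}: $\LL_3$, $\LL_4$, $\LL_{5a}\cup\LL_{5b}$, and $\LL_{6a}\cup\LL_{6b}$ are one-dimensional families of lines through $e_1$, $e_3$, $e_4$, and $e_2$, respectively, so each $e_j$ already lies on an infinite family of lines of $\LL(\a)$.

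For part~(b), I would fix $p \in \Z_1 \cup \Z_2$ and apply the checks above to each component. Using the relations $\lambda_2^2 = 1$, $\lambda_1^2 = -2(1 \mp \a)$, and $\lambda_3^2 = \pm 2$, the computation will show that each of $\LL_1, \LL_2, \LL_3, \LL_4, \LL_{5b}, \LL_{6b}$ contributes exactly one line through $p$, while $\LL_{5a}$ and $\LL_{6a}$ contribute none (since $p \notin \V(x_3)$ and $p \notin \V(x_1)$ for generic $\a$). The four lines arising from $\LL_3, \LL_4, \LL_{5b}, \LL_{6b}$ are $\overline{e_1 p}, \overline{e_3 p}, \overline{e_4 p}, \overline{e_2 p}$, which are pairwise distinct. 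The remaining two lines (from $\LL_1$ and $\LL_2$) pass through no $e_i$ and are distinguished by their defining linear forms; thus the six lines through $p$ are distinct.

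For part~(c), the analogous enumeration at $p \in \Z_3 \cup \Z_4$ will produce six incidences concentrated on only four distinct lines in $\PP^3$, through the following coincidences. When $p \in \Z_3$ (so $\lambda_3 = 0$ and $\lambda_2^2 = -1$), the $\LL_1$-line through $p$ can only arise from the limit $\gamma = \infty$ of the rational parametrization and therefore equals $\V(x_3,\, x_2 - \lambda_2 x_4) = \overline{e_1 p}$, coinciding with the $\LL_3$-line; similarly the $\LL_2$-line equals $\V(x_3,\, x_1 - \lambda_1 x_4) = \overline{e_2 p}$ and coincides with the $\LL_{6b}$-line; the $\LL_4$- and $\LL_{5a}$-lines are $\overline{e_3 p}$ and $\overline{e_4 p}$ respectively, while $\LL_{5b}$ and $\LL_{6a}$ contribute no line. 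When $p \in \Z_4$ (so $\lambda_1 = 0$), the $\LL_1$- and $\LL_4$-lines both equal $\overline{e_3 p}$, the $\LL_2$- and $\LL_{5b}$-lines both equal $\overline{e_4 p}$, the $\LL_3$- and $\LL_{6a}$-lines are $\overline{e_1 p}$ and $\overline{e_2 p}$ respectively, and $\LL_{5a}, \LL_{6b}$ contribute no line. I expect the main hurdle to be verifying these coincidences rigorously: in particular, the degenerate values $\gamma, \zeta, \beta, \omega \in \{0, \infty\}$ of the parameters for $\LL_1$ and $\LL_2$ that arise when a coordinate of $p$ vanishes must be handled by reverting to the $\PP^5$ matrix description used in Section~\ref{clines}. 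Once four distinct lines are identified, \cite[Remark~3.2]{SV1} forces the total incidence multiplicity at $p$ to equal six, so the two paired lines must each have multiplicity two and the remaining two must each have multiplicity one.
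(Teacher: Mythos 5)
Your proposal is correct and follows essentially the same route as the paper's proof: enumerate, component by component via the parametrizations of Section~\ref{p3lines}, the lines through each point of $\p$ (with the candidate line through a fixed vertex $e_m$ for $\LL_3,\LL_4,\LL_{5a},\LL_{5b},\LL_{6a},\LL_{6b}$, and the parameters $(\gamma,\zeta)$, $(\beta,\omega)$ for $\LL_1,\LL_2$), identify the coincidences at points of $\Z_3\cup\Z_4$ (including the degenerate parameter values, which the paper also handles via the Pl\"ucker/$\PP^5$ description), and invoke \cite[Remark~3.2]{SV1} for the multiplicity count. The only notable difference is that the paper treats $\Z_2$ by transporting the $\Z_1$ analysis through the isomorphism $\Phi:\A(-\a)\to\A(\a)$, whereas you verify $\Z_2$ directly; both work.
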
 

\begin{pf}
As (a) follows from (III) -- (VI) in Section~4.1, we discuss  (b) and (c).  

Let $p_1=(\lambda_1,\,1,\,\lambda_3,\,1)\in\Z_1$, where $\lambda_1^2=-2(1+\a) $ and $ \lambda_3^2=2\,$.  Setting $\gamma= \lambda_1/\lambda_3$ and $\zeta=1$, we have $(\gamma^2+\a)(\zeta^2+1)+2\zeta=0$, and thus $\V\left(x_1 - \gamma x_3,\,x_2-\zeta x_4\right)=\ell_{11}$ contains $p_1$ and corresponds to a point of $\LL_1$. Clearly, no other line given by $\LL_1 $ contains $p_1$.  Similarly, setting $\beta=\lambda_1$ and $\omega=\lambda_3$ implies that $\a\omega^2=-(\beta^2+2)$, and hence $p_1\in\V\left(x_1-\beta x_4,\,x_3- \omega x_2\right)=\ell_{12}$, which is a line corresponding to an element of $\LL_2$.  Clearly, no other line given by $\LL_2$ contains~$p_1$. 

Let $r_1 = (0,\, 1,\, \lambda_3,\, 1)$, and let $\ell_{13}$ denote the
line through $e_1$ and $r_1$.  Since $\lambda_3^2=2$, we have $r_1 \in
\V\left(x_1,\,x_2^3-x_3^2x_4+x_2x_4^2\right)$; hence, $\ell_{13}$ corresponds to an element of $\LL_3$, and $p_1\in\ell_{13}$.  Conversely,
let $r_1' = (0,\, b_2,\, b_3,\, b_4) \in \V\left(x_1,\,x_2^3-x_3^2x_4+x_2x_4^2\right)$. If $p_1$ lies on the line through $r_1'$ and $e_1$, then there exists $(\mu_1, \, \mu_2) \in \PP^1$ such that $p_1 = 
(\mu_1, \, \mu_2 b_2 ,\, \mu_2 b_3,\, \mu_2 b_4)$. Thus,
$\mu_2 b_2 = \mu_2 b_4\neq 0$ and $\lambda_3 = b_3/b_2$.
Hence, $r_1' = 
(0,\, 1,\, \lambda_3,\, 1 ) = r_1$, and so $\,\ell_{13}\,$ is the only line parametrized by $\LL_3$ that contains $p_1$.

Let $r_3 = (\lambda_1,\, 1,\, 0,\, 1)$, and let $\ell_{14}$ denote the
line through $e_3$ and $r_3$.  The fact that
$\lambda_1^2+2(1+\a)=0$ implies that $r_3 \in
\V\left(x_3,\,x_1^2x_2+\a x_2^2x_4+2x_2x_4^2+\a x_4^3\right)$; thus, $\ell_{14}$ corresponds to an element of $\LL_4$, and $p_1\in\ell_{14}$.  An argument similar to that of $\LL_3$ proves that no other line given by $\LL_4$ 
contains~$p_1$.  

As $p_1\notin\V(x_3)$, no line given by $\LL_{5a} $ contains $p_1$. For $\LL_{5b} $, let $r_4= (\lambda_1,\, 1,\, \lambda_3,\, 0)$, and let $\ell_{15}$ denote the line through $e_4$ and $r_4$, so $p_1\in\ell_{15}$.  Noting again that $\lambda_1^2+2(1+\a)=0$, we have $r_4\in \V\left(x_1^2+2x_2^2+\a x_3^2\right)$, which implies that $\ell_{15}$ corresponds to an element of $\LL_{5b}$. An argument similar to that of $\LL_3$ proves that no other line given by $\LL_{5b}$ contains~$p_1$.

As $p_1\notin\V(x_1)$, no lines given by $\LL_{6a} $ contain $p_1$. For $\LL_{6b} $, let $\ell_{16}$ denote the line through $e_2$ and $r_2$, where $r_2= (\lambda_1,\, 0,\, \lambda_3,\, 1)$, so $p_1\in\ell_{16} $. The conditions on the $\lambda_i$ imply that $r_2\in \V\left(x_1^2+\a x_3^2 +2 x_4^2\right)$; thus, $\ell_{16}$ corresponds to an element of $\LL_{6b}$. An argument similar to that of $\LL_3$ proves that no other line given by $\LL_{6b}$ contains~$p_1$.  Hence, there are exactly six distinct lines given by $\LL(\a)$ that contain $p_1$.


In order to consider points of $\Z_2$, we first consider the algebra isomorphism $\Phi$ from $\A(-\a)$ to $\A(\a)$ defined by $\Phi(x_1)=x_1$, $\Phi(x_4)=x_4$, $\Phi(x_2)=-x_2$, $\Phi(x_3)=i x_3$. Notice that $\Z_2=\Phi(\widehat{\Z_1})$, where $\widehat{\Z_1} = \{ \,(\lambda_1,\,1,\,\lambda_3,\,1)\;| \; \lambda_1^2=-2(1 -\a) \, , \; \lambda_3^2=2\, \}, $ and so corresponds to four point modules of $\A(-\a)$, by Proposition \ref{ptS}. Under the action of $\Phi$, the six distinct lines parametrized by $\LL(-\a)$ containing a given point, $\widehat{p_1}$, of $\widehat{\Z_1}$ map to six distinct lines parametrized by $\LL(\a)$ that contain the point $\Phi(\widehat{p_1})\in\Z_2$. Hence, (b) follows.


Let $p_3=(\lambda_1,\,\lambda_2,\,0,\,1)\in\Z_3$, where $\lambda_1^2=-2$ and $\lambda_2^2=-1$.  By setting $\zeta=\lambda_2$, we have $p_3\in\V\left(x_3,\,x_2-\zeta x_4\right)=\ell_{31}$, which corresponds to a point of $\LL_1$ since $\lambda_2^2+1=0 $. Clearly, any other line corresponding to a point of $\LL_1 $ does not contain $p_3$. In fact, $\ell_{31}$ corresponds to a point of $\LL_1\cap\LL_3$ since it is the line that passes through $e_1$ and $t_1$, where $t_1=(0,\lambda_2,\,0,\,1)\in\V\left(x_1,\,x_2^3-x_3^2x_4+x_2x_4^2\right)$.  If $p_3$ lies on the line through $e_1$ and $t_1'$ for any $t_1'=(0,\,b_2,\,b_3,\,b_4)\in\V\left(x_1,\,x_2^3-x_3^2x_4+x_2x_4^2\right)$, then there exists $(\mu_1,\,\mu_2)\in\PP^1$ such that $p_3=(\mu_1,\,\mu_2 b_2,\,\mu_2 b_3,\,\mu_2 b_4)$.  As $\mu_2\neq 0$, it follows that $b_3=0\neq b_4$ and $\lambda_2=b_2/b_4$.  Thus, $t_1'=(0,\,\lambda_2,\,0,\,1)=t_1$, and so $\ell_{31}$ is the only line corresponding to a point of $\LL_3$ that contains $p_3$. It follows that $\ell_{31}$ corresponds to a point of both $\LL_1$ and $\LL_3$.


By setting $\beta=\lambda_1$ and $\omega=0$, we have $p_3\in \V\left(x_1-\lambda_1 x_4,\,x_3\right)=\V\left(x_1-\beta x_4,\,x_3-\omega x_2\right)=\ell_{32}$, which corresponds to a point of $\LL_2$ since $\a\omega^2=-(\beta^2+2)$.  Clearly, no other line given by $\LL_2$ contains $p_3$.  In fact, $\ell_{32}$ corresponds to a point of $\LL_2\cap\LL_{6b}$ since it is the line that passes through $e_2$ and $t_2$, where $t_2=(\lambda_1,\,0,\,0,\,1)\in\V\left(x_1^2+\a x_3^2 + 2x_4^2\right)$. An argument similar to that of $\LL_3$ shows that no other line given by $\LL_{6b}$ contains~$p_3$.


Let $\ell_{34}$ denote the line through $e_3$ and $p_3$. In particular, $\ell_{34}$ 
corresponds to an element of $\LL_4$ since $p_3\in\V\left(x_3,\,x_1^2x_2+\a x_2^2x_4+2x_2x_4^2+\a x_4^3\right)$.  Let $t_3 = (b_1,\, b_2,\, 0,\, b_4) \in \V\big(x_3,\,x_1^2x_2+\a x_2^2x_4+2x_2x_4^2+\a x_4^3\big)$. If $p_3$ lies on the line through $t_3$ and $e_3$, then there exists $(\mu_1, \, \mu_2) \in \PP^1$ such that $p_3 = 
(\mu_2 b_1\, \mu_2 b_2 ,\, \mu_1,\, \mu_2 b_4)$. Thus,
$\mu_2 b_4 \neq 0$, $\lambda_1 = b_1/b_4$, and $\lambda_2 = b_2/b_4$. Hence $t_3= 
(\lambda_1,\, \lambda_2,\, 0,\, 1) = p_3$, and so $\ell_{34}$ is the only line of those parametrized by $\LL_4$ that contains~$p_3$.

Let $\ell_{35}$ denote the line connecting $p_3$ and $e_4$; in particular, $p_3\in\ell_{35}\subset \V(x_3)$, and $\ell_{35}$ is the only line given by $\LL_{5a} $ that contains $p_3$.  There are no lines parametrized by $\LL_{5b} $ containing $p_3$ since $p_3 \notin \V(x_1^2 + 2 x_2^2 + \a x_3^2)$, and additionally, no line given by $\LL_{6a} $ contains $p_3$ as $p_3\notin\V(x_1)$.   Thus, there are four distinct lines, with a total multiplicity of six (by \cite[Remark 3.2]{SV1}), parametrized by $\LL(\a)$ that contain $p_3$.


Let $p_4=(0,\,\lambda_2,\,\lambda_3,\,1)\in\Z_4$, where $\a\lambda_2^2+2\lambda_2+\a=0$ and $\a\lambda_3^2=-2\lambda_2^2\,$.  Letting $\gamma=0$ and $\zeta=\lambda_2$, so that $(\gamma^2+\a)(\zeta^2+1)+2\zeta=0$, reveals that the line $\V\left(x_1-\gamma x_3,\, x_2-\zeta x_4\right)= \V\left(x_1,\,x_2-\lambda_2x_4\right)=\ell_{41}$ contains $p_4$ and is among those parametrized by $\LL_1$; moreover, any other line given by $\LL_1 $ does not contain $p_4$. In fact, $ \ell_{41}$ corresponds to a point of $\;\LL_1\cap \LL_4\;$ since it is the line that passes through $e_3$ and $q_3$, where $q_3=(0,\,\lambda_2,\,0,\,1)\in\V\big(x_3,\,x_1^2x_2+\a x_2^2x_4+ 2x_2x_4^2+\a x_4^3\big)$.  Conversely, let $q_3'=(b_1,\,b_2,\,0,\,b_4)\in\V\big(x_3,\,x_1^2x_2+\a x_2^2x_4+ 2x_2x_4^2+\a x_4^3\big)$.  If $p_4$ lies on the line through $e_3$ and $q_3'$, then there exists $(\mu_1,\,\mu_2)\in\PP^1$ such that $p_4=(\mu_2 b_1,\,\mu_2 b_2,\,\mu_1,\,\mu_2 b_4)$.  As $\mu_2\neq 0$, it follows that $b_1= 0 \neq b_4$ and $\lambda_2=b_2/b_4$.  Thus, $q_3'=(0,\,\lambda_2,\,0,\,1)=q_3$, and so $\ell_{41}$ is the only line corresponding to a point of $\LL_4$ that contains $p_4$.


By setting $\beta=0$ and $\omega= \lambda_3 / \lambda_2$, we have $\a\omega^2=-(\beta^2+2)$, and thus the line $\V (x_1-\beta x_4,\,x_3-\omega x_2 )=\V\left(x_1,\, \lambda_3 x_2 - \lambda_2 x_3 \right) = \ell_{42}$ contains $p_4$ and also corresponds to a point of $\LL_2$. Clearly, no other line given by $\LL_2$ contains $p_4$.  Moreover, $\ell_{42}$ corresponds to a point of $\LL_2\cap \LL_{5b} $ since it is the line that passes through $e_4$ and $q_4$, where $q_4=(0,\,\lambda_2,\,\lambda_3,\,0)\in\V(x_1^2+2 x_2^2 + \a x_3^2\,)$.  An argument similar to that for $\LL_4$ shows that no other line given by $\LL_{5b}$ contains $p_4$.


Let $\ell_{43}$ denote the line through $e_1$ and $p_4$.  Since $p_4 \in
\V\left(x_1,\,x_2^3 - x_3^2x_4 + x_2 x_4^2\right)$, $\ell_{43}$ 
corresponds to an element of $\LL_3$.  Conversely,
let $q_1 = (0,\, b_2,\, b_3,\, b_4) \in \V\left(x_1,\,x_2^3 - x_3^2x_4 + x_2 x_4^2\right)$. If $p_4$ lies on the line through $q_1$ and $e_1$, then there exists $(\mu_1, \, \mu_2) \in \PP^1$ such that $p_4 = 
(\mu_1,\, \mu_2 b_2 ,\, \mu_2 b_3,\, \mu_2 b_4)$. Thus, $\mu_2 b_4 \neq 0$, $\lambda_2 = b_2/b_4$, and $\lambda_3 = b_3/b_4$.  Hence, $q_1 =(0,\,\lambda_2,\,\lambda_3,\,1) = p_4$, and so $\ell_{43}$ is the only line of those parametrized by $\LL_3$ that contains $p_4$.

Let $\ell_{46}$ denote the line connecting $p_4$ and $e_2$; in particular, $p_4\in\ell_{46}\subset \V(x_1)$, and $\ell_{46}$ is a line given by $\LL_{6a} $. Clearly no other line given by $\LL_{6a} $ contains $p_4$.

As $p_4\notin \V(x_3)$, there are no lines parametrized by $\LL_{5a} $ that contain $p_4$.  Additionally, no lines given by $\LL_{6b} $ contain $p_4$ because $p_4\notin \V(x_1^2 + \a x_3^2 + 2 x_4^2)$ (since $\a$ is generic).   Thus, there are four distinct lines parametrized by $\LL(\a)$ that contain $p_4$, with a total multiplicity of six (by \cite[Remark 3.2]{SV1}).
\end{pf}

\begin{rmks}\label{finalrmks}

\begin{enumerate}[(a)]
\itemsep1mm\item[]
\item In the proof of Theorem \ref{6lines}(b), a certain isomorphism $\Phi: \A(-\a)\rightarrow \A(\a)$ was used in order to deduce the result for the points of $\Z_2$ from that of the points of $\Z_1$. Although a similar symmetry between the points of $\Z_3$ and $\Z_4$ is likely, it is not clear, however, if an analogous map (or twisting system) exists in that context that would simplify the proof of Theorem \ref{6lines}(c).

\item In \cite{ChV}, Conjecture 4.2 proposes that the line scheme of a generic quadratic quantum $\PP^3$ should be the union of two spatial elliptic curves and four planar elliptic curves. The results of this paper support that conjecture in the following sense. If we write $\LL_5 = \LL_{5a}\cup\LL_{5b}$ and $\LL_6 = \LL_{6a}\cup\LL_{6b}$, then $ \LL(\a)=\bigcup_{i=1}^6 \LL_i$, where $\LL_5$ and $\LL_6$ may be viewed as degenerations of planar elliptic curves and $\LL_2$ may be viewed as a degeneration of a spatial elliptic curve. Moreover, Theorem \ref{6lines} implies that each point $p\in\p\setminus \Z_0$ lies on a line $\ell_i$ in $\PP(V^*)$ that is represented by a point on $\LL_i,$ for each $ i=1,\ldots,6$. In particular, if \cite[Conj. 4.2]{ChV} is correct, then $\A(\a)$ is not a generic quadratic quantum $\PP^3$.

\item The algebras in \cite{EE} have point schemes consisting of twenty distinct points and line schemes consisting of three spatial elliptic curves and four conics. So either \cite[Conj. 4.2]{ChV} needs to be modified (possibly applying only to certain classes of algebras, such as regular graded skew Clifford algebras), or the algebras in \cite{EE} are not generic quadratic quantum $\PP^3$s, or perhaps there exist many classes of generic quadratic quantum $\PP^3$s.

\end{enumerate}

\end{rmks}

\bigskip
 
\section{Appendix}\label{app}

In this section, we list the polynomials that define $\p(\a)$ and $\LL(\a)$ for generic $\a\in\kk$.

\medskip

\subsection{Polynomials Defining the Point Scheme}\label{app1}\hfill

The following are the polynomials given by the fifteen $4\times 4$ minors of the matrix $M$ given in Section \ref{sec2}.
\begin{enumerate}[\qquad\thesubsection.1.\quad]
\itemsep1mm
\item\label{pp1} $x_1 x_3(x_1^2+2x_2^2+\a x_3^2) $,

\item\label{pp2} $x_1 x_3(x_3^2 - 2x_2x_4)$,

\item\label{pp3} $x_1x_3(x_2-x_4)(x_2+x_4)$,

\item\label{pp4} $x_2x_3(x_1^2+2x_2^2+\a x_3^2)$,

\item\label{pp5} $x_3x_4(x_1^2+2x_2^2+\a x_3^2)$,

\item\label{pp6} $x_3(x_2^3-x_3^2x_4+x_2x_4^2)$,

\item\label{pp7} $x_1(2x_2x_3^2+x_1^2x_4-2x_2^2x_4+\a x_3^2x_4)$,

\item\label{pp8} $x_1x_2(x_1^2+\a x_3^2+2x_4^2)$,

\item\label{pp9} $x_1(-x_2x_3^2+x_2^2x_4+x_4^3)$,

\item\label{pp10} $ x_1^2x_2^2+\a x_2^2x_3^2 +2x_2x_3^2x_4 +x_1^2x_4^2 + \a x_3^2x_4^2$,

\item\label{pp11} $x_1^2x_3^2-x_1^2x_2x_4+2x_2^3x_4+\a x_2x_3^2 x_4$,

\item\label{pp12} $ x_1^2x_2^2+2x_2^2x_4^2+\a x_3^2x_4^2 $,

\item\label{pp13} $ x_4(x_2^3 - x_3^2x_4 + x_2x_4^2)$,

\item\label{pp14} $x_1(x_2^3-x_3^2x_4+x_2x_4^2)$,

\item\label{pp15} $x_3(x_1^2x_2+\a x_2^2x_4 + 2x_2x_4^2+\a x_4^3)$.
\end{enumerate}

\subsection{Polynomials Defining the Line Scheme}\label{app2}\hfill

The following are the 46~polynomials in the $M_{ij}$ coordinates from 
Section~\ref{sec3}.1 that define the line scheme $\LL(\a)$ of $\A(\a)$:
\\[-3mm]

\medskip

\begin{enumerate}[\qquad\thesubsection.1.\quad]
\itemsep1mm
\setcounter{enumi}{-1}
\item\label{lp0}
$P = M_{12} M_{34} - M_{13} M_{24} + M_{14} M_{23}$,

\item\label{lp1}
$2M_{13} M_{14} M_{23} M_{24}$,

\item\label{lp2}
$M_{12} (\a M_{12} M_{13} M_{23}+2 M_{13} M_{14} M_{23}-2 M_{12} M_{14} M_{24}-\a M_{13} M_{14} M_{34})$,

\item\label{lp3}
$M_{13} (\a M_{12} M_{13} M_{23}+2 M_{13} M_{14} M_{23}-2 M_{12} M_{14} M_{24}-\a M_{13} M_{14} M_{34})$,

\item\label{lp4}
$M_{14} (\a M_{12} M_{13} M_{23}+2 M_{13} M_{14} M_{23}-2 M_{12} M_{14} M_{24}-\a M_{13} M_{14} M_{34})$,

\item\label{lp5}
$M_{23} (\a M_{12} M_{13} M_{23}+2 M_{13} M_{14} M_{23}-2 M_{12} M_{14} M_{24}-\a M_{13} M_{14} M_{34})$,

\item\label{lp6}
$M_{24} (\a M_{12} M_{13} M_{23}+2 M_{13} M_{14} M_{23}-2 M_{12} M_{14} M_{24}-\a M_{13} M_{14} M_{34}) $,

\item\label{lp7}
$M_{34} (\a M_{12} M_{13} M_{23}+2 M_{13} M_{14} M_{23}-2 M_{12} M_{14} M_{24}-\a M_{13} M_{14} M_{34}) $,

\item\label{lp8}
$M_{23} (\a M_{12} M_{13} M_{23}+2 M_{13} M_{14} M_{23}+2 M_{12} M_{14} M_{24}-\a M_{13} M_{14} M_{34})$,

\item\label{lp9}
$M_{24} (\a M_{12} M_{13} M_{23}-2 M_{13} M_{14} M_{23}+2 M_{12} M_{14} M_{24}+\a M_{13} M_{14} M_{34})$,

\item\label{lp10}
$M_{14} (M_{12} M_{13} M_{23}-M_{13} M_{14} M_{34}+2 M_{23} M_{24} M_{34})$,

\item\label{lp11}
$M_{24} (M_{12} M_{13} M_{23}-M_{13} M_{14} M_{34}+2 M_{23} M_{24} M_{34})$,

\item\label{lp12}
$M_{12} (M_{13} M_{14} M_{23}+M_{12} M_{14} M_{24}-\a M_{23} M_{24} M_{34})$,

\item\label{lp13}
$M_{13} (M_{13} M_{14} M_{23}+M_{12} M_{14} M_{24}-\a M_{23} M_{24} M_{34})$,

\item\label{lp14}
$M_{23} (M_{13} M_{14} M_{23}+M_{12} M_{14} M_{24}-\a M_{23} M_{24} M_{34})$,

\item\label{lp15}
$M_{12} (M_{13} M_{14} M_{23}-M_{12} M_{14} M_{24}+\a M_{23} M_{24} M_{34})$,

\item\label{lp16}
$M_{13} (M_{13} M_{14} M_{23}-M_{12} M_{14} M_{24}+\a M_{23} M_{24} M_{34})$,

\item\label{lp17}
$M_{14} (M_{13} M_{14} M_{23}-M_{12} M_{14} M_{24}+\a M_{23} M_{24} M_{34})$,

\item\label{lp18}
$M_{23} (M_{13} M_{14} M_{23}-M_{12} M_{14} M_{24}+\a M_{23} M_{24} M_{34})$,

\item\label{lp19}
$M_{24} (M_{13} M_{14} M_{23}-M_{12} M_{14} M_{24}+\a M_{23} M_{24} M_{34})$,

\item\label{lp20}
$M_{34} (M_{13} M_{14} M_{23}-M_{12} M_{14} M_{24}+\a M_{23} M_{24} M_{34})$,

\item\label{lp21}
$M_{12} (M_{12} M_{13} M_{23} - M_{13} M_{14} M_{34} + 2 M_{23} M_{24} M_{34})$,

\item\label{lp22}
$
M_{13} (M_{13} M_{14} M_{23} - M_{12} M_{14} M_{24} - \a M_{23} M_{24} M_{34})$,

\item\label{lp23}
$M_{12} M_{13} M_{14} M_{23} - M_{12}^2 M_{14} M_{24} - \a M_{14} M_{23}^2 M_{24} - \a M_{13} M_{23} M_{24}^2$,

\item\label{lp24}
$
M_{12} M_{13} M_{23}^2 + M_{12} M_{14} M_{24} M_{34} + 2 M_{23}^2 M_{24} M_{34} - \a M_{23} M_{24} M_{34}^2$,

\item\label{lp25}
$M_{12} M_{13} M_{24}^2 - M_{12} M_{13} M_{23} M_{34} + M_{14}^2 M_{24} M_{34} + 2 M_{24}^3 M_{34} - 2 M_{23} M_{24} M_{34}^2 + \a M_{24} M_{34}^3
$,

\item\label{lp26}
$M_{12} M_{13}^2 M_{23} - M_{12}^2 M_{13} M_{24} + 2 M_{13} M_{23} M_{24} M_{34} - 2 M_{12} M_{24}^2 M_{34} - \a M_{13} M_{24} M_{34}^2$,

\item\label{lp27}
$M_{12} M_{14}^2 M_{23} - M_{14}^3 M_{34} - 2 M_{14} M_{24}^2 M_{34} + \a M_{12} M_{23} M_{34}^2 + 2 M_{14} M_{23} M_{34}^2 - \a M_{14} M_{34}^3$,

\item\label{lp28}
$M_{12} M_{13}^2 M_{23} - M_{12}^2 M_{13} M_{24} - 2 \a M_{13} M_{23}^2 M_{24} + 2 M_{13} M_{23} M_{24} M_{34} - 2 M_{12} M_{24}^2 M_{34} - \a M_{13} M_{24} M_{34}^2$,

\item\label{lp29}
$M_{12} M_{13}^2 M_{23} + M_{12}^2 M_{14} M_{23} + M_{13}^2 M_{14} M_{34} - M_{12} M_{14}^2 M_{34} - \a M_{12} M_{23}^2 M_{34} + 2 M_{12} M_{23} M_{34}^2 + \a M_{14} M_{23} M_{34}^2$,

\item\label{lp30}
$M_{12}^2 M_{13} M_{23} + M_{13}^2 M_{14} M_{24} - M_{12} M_{14}^2 M_{24} + 2 M_{12} M_{23} M_{24} M_{34} + \a M_{14} M_{23} M_{24} M_{34}$,

\item\label{lp31}
$M_{12}^2 M_{13} M_{23}-M_{12}^3 M_{24}-\a M_{12} M_{23}^2 M_{24}+2 M_{13} M_{23} M_{24}^2-2 M_{12} M_{24}^3-\a M_{13} M_{24}^2 M_{34}$,

\item\label{lp32}
$M_{12}^2 M_{14} M_{23} - M_{12} M_{14}^2 M_{34} - \a M_{12} M_{23}^2 M_{34} - 2 M_{14} M_{23}^2 M_{34} + \a M_{14} M_{23} M_{34}^2$,

\item\label{lp33}
$M_{12}^2 M_{14} M_{23}+4 M_{14} M_{23} M_{24}^2-M_{12} M_{14}^2 M_{34}-\a M_{12} M_{23}^2 M_{34}-2 M_{14} M_{23}^2 M_{34}+\a M_{14} M_{23} M_{34}^2$,

\item\label{lp34}
$
M_{12}^2 M_{13} M_{24} + 2 M_{13} M_{14}^2 M_{24} - M_{13}^2 M_{14} M_{34} + 2 M_{12} M_{24}^2 M_{34} + \a M_{13} M_{24} M_{34}^2$,

\item\label{lp35}
$\a M_{12}^2 M_{13} M_{23} + 2 M_{12} M_{13} M_{14} M_{23} + \a M_{13} M_{14}^2 M_{23} + 2 M_{12}^2 M_{14} M_{24} + \a M_{13}^2 M_{14} M_{24}$,

\item\label{lp36}
$M_{12}^3 M_{14} - M_{13}^2 M_{14}^2 + M_{12} M_{14}^3 - \a M_{12}^2 M_{23} M_{34} - 2 M_{12} M_{14} M_{23} M_{34} - \a M_{14}^2 M_{23} M_{34}$,

\item\label{lp37}
$M_{12}^3 M_{23} + \a M_{12} M_{23}^3 + 2 M_{14} M_{23}^3 + 2 M_{12} M_{23} M_{24}^2 - M_{12}^2 M_{14} M_{34} - \a M_{14} M_{23}^2 M_{34}$,

\item\label{lp38}
$M_{12}^3 M_{13} - M_{13}^3 M_{14} + M_{12} M_{13} M_{14}^2 + 2 M_{12}^2 M_{24} M_{34} + \a M_{13}^2 M_{24} M_{34}$,

\item\label{lp39}
$M_{12}^4 - M_{12} M_{13}^2 M_{14} + M_{12}^2 M_{14}^2 + \a M_{12}^2 M_{23}^2 + 2 M_{12} M_{14} M_{23}^2 + \a M_{14}^2 M_{23}^2 + 2 M_{12}^2 M_{24}^2 + \a M_{13}^2 M_{24}^2$,

\item\label{lp40}
$\a M_{13} M_{14} M_{23}^2+\a M_{13}^2 M_{23} M_{24}-2 M_{13} M_{14} M_{23} M_{34}+2 M_{12} M_{14} M_{24} M_{34}+\a M_{13} M_{14} M_{34}^2
$,

\item\label{lp41}
$M_{13}^2 M_{23}^2+M_{12} M_{14} M_{23}^2-\a M_{23}^3 M_{34}+M_{12} M_{14} M_{34}^2+2 M_{23}^2 M_{34}^2-\a M_{23} M_{34}^3
$,

\item\label{lp42}
$M_{13}^3 M_{23}-M_{12} M_{13}^2 M_{24}-\a M_{13} M_{23}^2 M_{34}+2 M_{13} M_{23} M_{34}^2-2 M_{12} M_{24} M_{34}^2-\a M_{13} M_{34}^3
$,

\item\label{lp43}
$2 M_{14}^2 M_{23} M_{24}+2 M_{13} M_{14} M_{24}^2-\a M_{12} M_{13} M_{23} M_{34}-2 M_{13} M_{14} M_{23} M_{34}+\a M_{13} M_{14} M_{34}^2
$,

\item\label{lp44}
$M_{14}^2 M_{23} M_{24} + M_{13} M_{14} M_{24}^2 - M_{13} M_{14} M_{23} M_{34} + \a M_{23} M_{24} M_{34}^2
$,

\item\label{lp45}
$M_{14}^2 M_{23}^2+M_{13}^2 M_{24}^2-M_{13}^2 M_{23} M_{34}+M_{14}^2 M_{34}^2+\a M_{23}^2 M_{34}^2+2 M_{24}^2 M_{34}^2-2 M_{23} M_{34}^3+\a M_{34}^4.
$

\end{enumerate}


\bigskip

\begin{ack}
The authors gratefully acknowledge support from the NSF under grants
DMS-0900239 and DMS-1302050. 
\end{ack}


\vfill


\vfill

\end{document}